\documentclass[reqno,11pt]{amsart}

\usepackage{amssymb, amsmath,latexsym,amsfonts,amsbsy, amsthm,mathtools, stmaryrd}
\usepackage{rotating}
\usepackage{tikz}
\usepackage{calc}
\usepackage{tikz-cd}
\usepackage{mathabx}
\usepackage{tikz-3dplot}
\usetikzlibrary{calc}
\usetikzlibrary{decorations.markings,arrows}
\usetikzlibrary{shapes.geometric}
\usepackage{color}
\usepackage{extarrows}
\usepackage{graphicx}
\allowdisplaybreaks[4]
\let\ve=\varepsilon

\newcommand{\beq}{\begin{equation}}
\newcommand{\eeq}{\end{equation}}
\newcommand{\ben}{\begin{eqnarray}}
\newcommand{\een}{\end{eqnarray}}
\newcommand{\beno}{\begin{eqnarray*}}
\newcommand{\eeno}{\end{eqnarray*}}

\renewcommand{\theequation}{\thesection.\arabic{equation}}

\newtheorem{theorem}{Theorem}[section]

\newtheorem{proposition}[theorem]{Proposition}

\newtheorem{Theorem}{Theorem}[section]

\newtheorem{Proposition}[Theorem]{Proposition}
\newtheorem{Lemma}[Theorem]{Lemma}

\newtheorem{Remark}[Theorem]{Remark}

\newcommand{\ud}{\mathrm{d}}

\newcommand{\nn}{\mathbf{n}}

\newcommand{\vv}{\mathbf{v}}

\newcommand{\BB}{\mathbf{B}}

\newcommand{\EE}{\mathbf{E}}

\newcommand{\II}{\mathbf{I}}

\newcommand{\QQ}{\mathbf{Q}}
\newcommand{\PP}{\mathbf{P}}

\definecolor{grey}{rgb}{0.5,0.5,0.5}

\begin{document}
\title[Anisotropic Isotropic-Nematic]{Asymptotic limit for the isotropic-nematic problem with anisotropic elasticity}

\author{Fanghua Lin}
\address{Courant Institute of Mathematical Science, New York University}
\email{linf@cims.nyu.edu}

\author{Wei Wang}
\address{School of Mathematical Sciences, Zhejiang University, Hangzhou 310027, China}
\email{wangw07@zju.edu.cn}

\author{Zhifei Zhang}
\address{School of Mathematical Sciences, Peking University, Beijing 100871, China}
\email{zfzhang@math.pku.edu.cn}

\renewcommand{\theequation}{\thesection.\arabic{equation}}
\setcounter{equation}{0}


\begin{abstract}
We consider the isotropic-nematic interface problem for liquid crystals based on the Landau-de Gennes model.
We show that when the anisotropic elastic constant $L_2$ is negative, then the surface tension strength of the isotropic-nematic interface is proportional to  $\sqrt{L_1+\frac{2}{3}{L}_2}$, and the homeotropic alignment is more favored near the interface. This result gives a rigorous confirmation for de Gennes' formal derivation in \cite{deGen1971} on the isotropic-nematic tension strength and the anchoring alignment condition in the case of $L_2<0$.
\end{abstract}

\date{\today}
\maketitle

\numberwithin{equation}{section}

\section{Introduction}
\subsection{Background of the isotropic-nematic phase transition problem}

Phase transition problems have drawn great interest in both analysis and numerical simulations due to the wide applications in various physical systems and mathematical challenges. A commonly used model for phase transition is the following Cahn-Hilliard functional:
\begin{align}\label{intro:scalar}
  E_\ve(u)=\int_{\Omega}\Big(\frac{1}{2}|\nabla u|^2+\frac{1}{\ve^2}F(u)\Big)\ud x,
\end{align}
where $\ve$ is a small positive parameter, $\Omega\subset\mathbb{R}^n$ is a bounded domain, and $u:\Omega\to\mathbb{R}^k$ is a phase-indicator function. The potential function $F:\mathbb{R}^k\to\mathbb{R}_{\ge 0}$ vanishes at several isolated points or on several disjoint manifolds, which represent different phases or equilibrium states. There have been many works devoting to the
asymptotic behaviour of minimizers for \eqref{intro:scalar} when $k=1$ (i.e., $u$ is a scalar function), see \cite{ModiMo, Modi, Ster, KS, FT} for examples. When $k=2$ and the potential wells of $F$ are points or curves, the problem is studied by Fonseca-Tartar \cite{FT}, Sternberg \cite{Ster}, and Andre-Shafrir \cite{AS}. More recently, Lin-Pan-Wang \cite{LPW} considered the general case of $k\ge 2$ and rigorouly analyzed the leading order and next order in the expansion of the energy at minimizers.

Liquid crystal are a kind of material which has physical properties between the conventional fluids and solids (crystals). They may flow like ordinary fluid, but the molecules tend to have the same alignment near a material point, thus showing anisotropy. This distinguishing property makes liquid crystal materials have a wide range of applications. There are many different phases in liquid crystals, including the isotropic, nematic, and smectic phases. In nematic phase, the molecules tend to align in a common direction but are disorder in translation.

There are many different mathematical models to characterize the anisotropic structure of liquid crystals. One of the most commonly used model is the
Landau-de Gennes $\QQ$-tensor theory. In this model,  at each material point $x$, it is introduced a matrix
\begin{align*}
 \QQ(x)\in \mathcal{Q}:=\{\QQ\in\mathbb{R}^{3\times 3}:\quad  \QQ =\QQ^T,~ \mathrm{tr}\, \QQ=0 \}.
\end{align*}
The space $\mathcal{Q}$ is called the space of $\QQ$-tensor. If $\QQ(x)$ has three equal eigenvalues, then $\QQ(x)$ must be $0$ which is called {\it isotropic};  If $\QQ(x)$ has two equal eigenvalues which differ from the third one, we call $\QQ$ is uniaxial; If all eigenvalues of $\QQ(x)$ are different, we say $\QQ$ is biaxial.
For uniaxial $\QQ(x)$, one may write
\begin{align*}
  \QQ=s(\nn\otimes\nn-\frac13\II),\qquad s\in\mathbb{R},\quad \nn\in\mathbb{S}^2,
\end{align*}
where $\II$ is the $3\times 3$ identity matrix. In this case, $s$ is called the order parameter and $\nn$ is interpreted as the alignment of liquid crystal molecules.

The Landau de-Gennes energy for liquid crystals can be written as
\begin{align}\label{energy:LdG}
  \mathcal{F}_\ve(\QQ,\nabla\QQ)=\int_{\Omega}\Big(F_e(\nabla\QQ)+\frac{1}{\ve^2}F_b(\QQ)\Big)\ud x.
\end{align}
The first term $F_e(\nabla \QQ)$ is the elastic energy which takes the form
\begin{align}\label{energy:LdG-elastic}
F_e(\nabla\QQ)=&\frac{L_1}{2}|\nabla \QQ|^2+\frac{L_2}{2}Q_{ij,j}Q_{ik,k}.
\end{align}
Here $L_1, L_2$ are elastic coefficients, and in particular, the $L_2$ term represents the anisotropy of elasticity, which is a main feature for liquid crystal materials.  The second term $F_b$, called the bulk energy, are usually chosen as the following polynomial form:
\begin{align}\label{energy:LdG-bulk}
F_b(\QQ)=&\frac{a}{2}|\QQ|^2-\frac{b}{3}\mathrm{tr}\,\QQ^3+\frac{c}{4}(|\QQ|^2)^2,
\end{align}
where $a, b, c$ are material-dependent constants.  The bulk energy $F_b$ has two minimal wells at
\begin{align*}
\{0\} \text{ and }\mathcal{N}:=\Big\{s_+(\nn\nn-\frac13\II):\quad \nn\in\mathbb{S}^2, s_+=\frac{b+\sqrt{b^2-24ac}}{4c}\Big\},
\end{align*}
which correspond to the isotropic phase and nematic phase for liquid crystal materials respectively.

We are interested in the phase transition between the isotropic and the nematic phases.
Thus, it is assumed the critical phase transition regime, i.e.,
$F_b(0)=F_b(\QQ_*)$ for all $\QQ_*\in\mathcal{N}$, which is equivalent to the relation $b^2=27ac$. Then we have
\begin{align}\label{relation:abc}
a=\frac{cs_+^2}{3},\quad b=3cs_+,
\end{align}
and for uniaxial tensors $\QQ=s(\nn\nn-\frac13\II)$, it holds
\begin{align}\label{def}
F_b(\QQ)=\frac{c}{9}s^2(s-s_+)^2\triangleq \mathfrak{f}(s).
\end{align}

The main goal of this paper is to investigate the asymptotics of the energy functional \eqref{energy:LdG} as $\ve\to 0$ under suitable Dirichlet boundary data.

Note that if $L_2= 0$, the isotropic-nematic interface problem can be viewed as a special case of Keller-Rubinsterin-Sterberg problem \cite{RSK1,RSK2}.
The work \cite{LPW} by Lin-Pan-Wang can be applied with only slight modifications to obtain that the leading order energy of $ \mathcal{F}_\ve(\QQ,\nabla\QQ)$ is $\frac{1}{\ve}\sqrt{L_1}\mathcal{H}^2(\Gamma)$ while the next order is the Dirichlet energy
of the alignment vector field with homogeneous Neumann conditions on the interface. The regularity of the minimizers for limit problem is studied in \cite{LW19}.

The analysis of isotropic-nematic interface problem was initiated by de Gennes \cite{deGen1971}. In
\cite{deGen1971}, by simply considering the 1-D problem and assuming $\QQ$ is diagonal and uniaxial, de Gennes calculated the strength of surface tensions for both of normal and tangential conformations, which are given by
\begin{align}
c_0=c_*\beta,\quad \text{with }\beta= \left\{
  \begin{aligned}\label{value:tension-neg}
    \sqrt{L_1+\frac{2}{3}L_2},\quad & \text{for normal conformation};\\ 
  \sqrt{L_1+\frac{1}{6}L_2},\quad &\text{for tangential conformation}.
  \end{aligned}\right.
\end{align}
Here
\begin{align*}
c_*=\sqrt{\frac23}\int_{0}^{s_+}\sqrt{2\mathfrak{f}(s)} \ud s=\frac{s_+^3\sqrt{c}}{9\sqrt{3}},
\end{align*}
is a constant independent of $L_1, L_2$.
Then it is argued that near the isotropic-nematic interface, the normal conformation is favored for $L_2<0$ while the tangential conformation is favored for $L_2>0$. After then, there are a number of studies on the properties of order tensors near the interface \cite{DK, PSW, KBAM1, KBAM2, PS} by using both the numerics and asymptotic analysis. In these works, the equation of the static interface are solved on 1-D intervals or the whole real line under various anchoring conditions at the boundaries or infinities. It was shown that for homeotropic anchoring condition, there is an explicit uniaxial solution. In addition, this solution is proved to be stable for $L_2\le 0$ and unstable for $L_2>0$ \cite{PWZZ} in the 1-D setting.  However, for the tangential anchoring condition, the solution has to be biaxial unless $L_2=0$.

Recently, there has been a wide interests in the study of the high-dimension isotropic-nematic interface problem with anisotropic elasticity  based on the vectorial-type theories.
Golovaty et. al  \cite{GNSV1, GNSV2}  investigated the asymptotic limit of the isotropic-nematic transition problem with highly disparate elastic constants  in dimension two based on a $\mathbb{R}^2$-valued vector model. Liu \cite{Liu} considered the dynamic problem under the similar setting. In \cite{LW23}, Lin-Wang studied the isotropic-nematic interface problem based on the Ericksen model, which is a typical and important vectorial model. In particular, they obtained the asymptotic limit of the energy functional and rigorously justified that different  boundary conditions (homeotropic, trangential, or free boundary conditions) are intrinsically determined by different anisotropic elasticities under the limit process.

Up to the authors' knowledge, the rigorous analytical results on the 3-D isotropic-nematic interface based on the Landau-de Gennes $\QQ$-tensor theory are quite few. The vector models could be viewed as reduced Landau-de Gennes models under the assumption that the order tensors are uniaxial everywhere. This assumption are reasonable when one concerns the pure isotropic and nematic phases as the biaxiality could be neglected in these states. However, the biaxiality might not be neglected near the defects and the region of phase transion. Therefore, investigating the fine structure of particular configurations such as defects and phase transition interfaces within the Landau-de Gennes $\QQ$-tensor models would be of physical importance. 
We refer to \cite{FWZZ1} for formal analysis, and to \cite{FWZZ2, Laux-Liu} for the analysis of isotropic-nematic interfacial dynamics in the case $L_2=0$.

\subsection{Presentation of the main results}

Assume $\Omega$ is a bounded smooth domain in $\mathbb{R}^3$ with $\partial\Omega=\Sigma$. Let  $\Sigma^\pm$ be two disjoint connected open subsets of $\Sigma$ such that $\partial\Sigma^+=\partial\Sigma^-=\Sigma^0$ is a smooth closed curve of $\Sigma$, and $\Sigma=\Sigma^+\cup\Sigma^-\cup\Sigma^0$.
For given boundary data $\BB\in H^{1/2}(\partial\Omega)$, we define:
\begin{align*}
  H_{\BB}^1(\Omega)=\{\QQ\in H^1(\Omega),\quad \QQ|_{\partial\Omega}=\BB\}.
\end{align*}

We take the boundary data:
\begin{align}
\BB_\ve(x)=s_\ve(x)\EE_\ve(x), \quad \text{for }x\in \Sigma=\partial\Omega,
\end{align}
where $s_\ve$ is a scalar function and $\EE_\ve$ is $\mathcal{N}$-valued. Thus $\BB_\ve$ is uniaxial.

For the leading order asymptotic, we assume that:
\begin{itemize}
  \item[(A1)] There exists $M_0>0$ such that
  \begin{align}
&\|s_\ve\|_{L^2(\Sigma^-)}\to 0,\quad  \text{and } \|s_\ve-s_+\|_{L^2(\Sigma^+)}\to 0;\\
&\int_{\Sigma}\Big\{\ve\beta_L^2|\nabla_{\Sigma}s_\ve|^2+\frac{1}{\ve}\mathfrak{f}(s_\ve)\Big\}\ud \mathcal{H}^2\le M_0,\quad \text{for }\forall \ve\in(0,1);
\end{align}
\item[(A2)] There exists a map $\hat{\EE}_\ve\in H^1(\Omega, \mathcal{N})$ such that
\begin{align*}
\lim_{\ve\to 0}\ve\int_{\Omega}|\nabla\hat{\EE}_\ve|^2\ud x\to 0,\qquad \text{ with }\hat{\EE}_\ve|_{\Sigma}=\EE_\ve, ~\hat{\EE}_\ve|_{\Gamma}=\nu_\Gamma.
\end{align*}

\end{itemize}
Define
\begin{align*}
  F^*_{\ve}= \inf ~\big\{ \mathcal{F}_\ve(\QQ_\ve):~~\QQ_\ve\in H^1_{\BB_\ve}(\Omega) \big\}.
\end{align*}
The first result of our paper is the following characterization of the leading order of $ F^*_{\ve}$ as $\ve\to 0$ for $L_2<0$, which
gives a rigorous confirmation of de Gennes' formal derivation on the isotropic-nematic interface tension in the three dimensional space when $L_2<0$.
\begin{theorem}\label{thm:main1}
  Assume $-\frac32{L_1}<L_2<0$ and
\begin{itemize}
  \item $\Gamma\subset \Omega$ is an area-minimizing hypersurface with $\partial\Gamma=\Sigma$.
\item $\BB_\ve:\partial\Omega\to\mathcal{Q}$ satisfies the conditions.
\end{itemize}
  Then we have
  \begin{align*}
    \lim_{\ve\to 0} \ve F_\ve^* =\sqrt{{L_1}+\frac{2L_2}{3}}c_*\mathcal{H}^{n-1}(\Gamma).
  \end{align*}
\end{theorem}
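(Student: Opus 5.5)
The plan is to prove matching upper and lower bounds for $\ve F_\ve^*$, writing $\beta_L=\sqrt{L_1+\frac23L_2}$.

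\textbf{Upper bound.} I will construct an explicit competitor. Let $d(x)$ be the signed distance to $\Gamma$, positive on the side adjacent to $\Sigma^+$. Let $s_0$ be the one-dimensional heteroclinic solving $\beta_L^2 s_0'^2\cdot\frac23=2\mathfrak f(s_0)$, with $s_0(-\infty)=0$ and $s_0(+\infty)=s_+$; the factor $\frac23$ comes from $|\nn\nn-\frac13\II|^2=\frac23$. Set
\[
\QQ_\ve=s_0(d/\ve)\big(\hat\nn_\ve\hat\nn_\ve-\tfrac13\II\big),
\]
where $\hat\EE_\ve=s_+(\hat\nn_\ve\hat\nn_\ve-\frac13\II)$ comes from (A2), so that $\hat\nn_\ve=\pm\nu_\Gamma$ on $\Gamma$. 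In a tubular neighbourhood the gradient is dominated by $\ve^{-1}s_0'\,\nabla d\otimes(\nn\nn-\frac13\II)$. A direct computation with $\nn=\nu=\nabla d$ gives
\[
Q_{ij,j}Q_{ik,k}=\ve^{-2}s_0'^2\,\big|(\nn\nn-\tfrac13\II)\nu\big|^2=\tfrac49\,\ve^{-2}s_0'^2,
\]
so that
\[
F_e\approx \tfrac12\ve^{-2}s_0'^2\Big(\tfrac23L_1+\tfrac49L_2\Big)=\tfrac13\ve^{-2}s_0'^2\beta_L^2 .
\]
Equipartition with $\mathfrak f$ and the coarea formula then give $\ve\mathcal F_\ve\to c_*\beta_L\mathcal H^2(\Gamma)$. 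The cross terms involving $\nabla\hat\nn_\ve$ are $o(1)$ by (A2) and Cauchy--Schwarz. Boundary matching is handled by a boundary layer of width $\ve$ that interpolates to $s_\ve$, and its cost is $o(1/\ve)$ by (A1). Near $\Sigma^0=\partial\Gamma$ some care is needed, but that region has small measure.

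\textbf{Lower bound (the main obstacle).} The key step is a pointwise algebraic inequality. For $L_2<0$ and every $\QQ\in\mathcal Q$ and $\nabla\QQ$,
\[
\tfrac{L_1}2|\nabla\QQ|^2+\tfrac{L_2}2 Q_{ij,j}Q_{ik,k}\ \ge\ \tfrac12\beta_L^2\,|\nabla|\QQ||^2 ,
\]
or some variant that bounds the energy below by $\frac12\beta_L^2|\nabla\phi(\QQ)|^2$ for a suitable scalar $\phi$. To get it, decompose $\QQ=|\QQ|\,\hat\QQ$, so that $\partial_j\QQ=\partial_j|\QQ|\,\hat\QQ+|\QQ|\,\partial_j\hat\QQ$. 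The term $Q_{ij,j}Q_{ik,k}=|\mathrm{div}\,\QQ|^2$ is to be bounded above by $L_1$-type quantities. Since $L_2<0$, what is needed is the sharp bound
\[
|\mathrm{div}\,\QQ|^2\le \tfrac23\,|\nabla|\QQ||^2\,|\hat\QQ|_{op}^2+\cdots .
\]
The constant needed is $\sup_{|\hat\QQ|=1,\,|p|=1}|\hat\QQ p|^2=\frac23$, attained at a uniaxial $\hat\QQ$ with $p=\nn$. The cross and angular parts must be absorbed using $L_1|\QQ|^2|\nabla\hat\QQ|^2$ and $L_1+\frac32 L_2>0$. This absorption is delicate, and it may only hold with $|\nabla\QQ|^2$ in full rather than separately. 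The fallback is the general bound $|\mathrm{div}\,\QQ|^2\le\frac23|\nabla\QQ|^2$, which holds since $\sum_j\partial_jQ_{ij}$ is a trace-type contraction, combined with an analysis of the potential's bias toward uniaxiality. With the inequality in hand, the Modica--Mortola trick
\[
\tfrac{1}{2}\beta_L^2|\nabla u|^2+\ve^{-2}F_b\ \ge\ \ve^{-1}\beta_L\,\big|\nabla\big(\Psi(\QQ)\big)\big| ,
\]
with $\Psi$ the geodesic distance to $0$ for the degenerate metric $\sqrt{2F_b}$, whose value at $\mathcal N$ is $c_*$, reduces the problem to BV compactness of $\Psi(\QQ_\ve)$. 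The limit is a characteristic function of a set whose boundary spans $\Sigma^0$, by (A1). The area-minimizing property of $\Gamma$ then gives the liminf bound.
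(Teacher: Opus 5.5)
Your upper bound is fine and is in substance the paper's construction: a uniaxial profile whose director is $\nu_\Gamma$ across the layer, where $|(\nn\nn-\frac13\II)\nu|^2=\frac49$ produces exactly $\beta_L^2$. The paper imports this from Lin--Wang's vector-model analysis.

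The genuine gap is the lower bound. The pointwise inequality you need is false for every $L_2<0$, and so is your fallback. Write $\QQ=r\hat\QQ$ and take, at a point, $\hat\QQ=\sqrt{3/2}\,(\nn\nn-\frac13\II)$, $\nabla r=\nn$ and $r\nabla\hat\QQ=tA$, where $A_{ijk}=\frac12(n_i\delta_{jk}+n_j\delta_{ik})-\frac13\delta_{ij}n_k-\hat Q_{ij}(\hat\QQ\nn)_k$. One checks $\hat Q_{ij}A_{ijk}=0$, $|A|^2=1$ and $A_{ijj}=n_i$. Hence $|\nabla\QQ|^2=1+t^2$, $\nabla\cdot\QQ=(\sqrt{2/3}+t)\nn$, and
\[
\tfrac{L_1}{2}|\nabla\QQ|^2+\tfrac{L_2}{2}|\nabla\cdot\QQ|^2-\tfrac12\beta_L^2|\nabla|\QQ||^2=\tfrac12\Big[(L_1+L_2)t^2+2\sqrt{2/3}\,L_2\,t\Big]<0
\]
for small $t>0$. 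The cross term is linear in the angular gradient, while the only available absorber $L_1r^2|\nabla\hat\QQ|^2$ is quadratic. Moreover $|\hat\QQ\nn|^2=\frac23$ leaves no slack exactly at the uniaxial states the optimal transition runs through.

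Your fallback $|\nabla\cdot\QQ|^2\le\frac23|\nabla\QQ|^2$ is only the rank-one constant. The sharp pointwise constant is $\frac53$, attained by the linear field $Q_{ij}=\frac12(\delta_{i1}x_j+\delta_{j1}x_i)-\frac13\delta_{ij}x_1$, for which $|\nabla\QQ|^2=\frac53$ and $|\nabla\cdot\QQ|^2=\frac{25}{9}$. Add a small multiple of this field to a constant in $\mathcal N$. This gives a point where $F_b=0$ and the total energy density is negative whenever $L_2<-\frac35L_1$, which lies inside the theorem's range. So in that range no pointwise lower bound of the elastic density by $\frac12\beta_L^2|\nabla\phi(\QQ)|^2$ can hold, whatever $\phi$ is.

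The missing idea is that $|\nabla\cdot\QQ|^2\le\frac23|\nabla\QQ|^2$ is true only after integration, up to a term fixed by the boundary data. This is the paper's div-curl identity, Lemma~\ref{lem:decomp}:
\[
|\nabla\QQ|^2=\tfrac32|\nabla\cdot\QQ|^2+\tfrac14|\mathcal T(\QQ)|^2+\big(\partial_kQ_{li}\partial_lQ_{ki}-\partial_kQ_{ki}\partial_lQ_{li}\big).
\]
The last bracket is a null Lagrangian whose integral depends only on $\QQ|_{\partial\Omega}$. Integrating gives
\[
\mathcal F_\ve(\QQ)=\int_\Omega\Big[\tfrac{\beta_L^2}{2}|\nabla\QQ|^2-\tfrac{L_2}{12}|\mathcal T(\QQ)|^2+\tfrac{1}{\ve^2}F_b(\QQ)\Big]\ud x+L_2\,\mathcal B_{\partial\Omega}(\BB_\ve),
\]
and for $L_2<0$ the $\mathcal T$-term is nonnegative and can be dropped. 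You then need to control the boundary term $\ve L_2\mathcal B_{\partial\Omega}(\BB_\ve)$ by the data. After that, the rest of your plan goes through exactly as in the paper's lower-bound argument: Modica--Mortola with $d_0^F$, BV compactness of $d_0^F(\QQ_\ve)$, the boundary condition on $\Sigma^0$ from (A1), and area-minimality of $\Gamma$. No analysis of a uniaxial bias of $F_b$ is needed.
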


For the next order asymptotics of the minimum $F_\ve^*$, we assume that  $\Gamma$ is the unique, strictly stable, area minimizing surface spanned by $\Sigma^0$. Let $d_{\Gamma}(x)$ be the signed distance function of $x$ to $\Gamma$.
Furthermore, we let
\begin{itemize}
\item $\EE=s_+\Big(\mathbf{b}\mathbf{b}-\frac13\II\Big)\in H^1(\partial\Omega, \mathcal{N})$ satisfies $\mathbf{b}\parallel \nu_{\Gamma}$ a.e. $x\in\Sigma^0$;
\item the function $S_R(z): [-R, R]\to [0,1]$ satisfies $S_R(-z)=1-S_R(z)$ for $|z|\le R$ and
\begin{align*}
 & S_R(R)=1,\quad  S_R(-R)=0,\\
 & \max_{|z|\le R}\big|\beta_L^2(S_R')^2+S_R^2(1-S_R)^2-2\beta_L\sqrt{S_R^2(1-S_R)^2}|S_R'|\big|\le Ce^{-2R/\beta_L},\\
  &\int_{-R}^R \big(\beta_L^2(S_R')^2+S_R^2(1-S_R)^2\big)\ud z\le \frac{\beta_L}3+Ce^{-R/\beta_L},\\
  & |S_R'(z)|\le C e^{-z/\beta_L},\text{ for }|z|\le R.
  \end{align*}
\end{itemize}
Then existence of $S_R$ for given $\beta_L>0$ is shown in Appendix.
Then we assume the boundary data
\begin{align*}
\BB_\ve(x)=s_\ve(x)\EE_\ve(x), \quad \text{for }x\in \partial\Omega,
\end{align*}
satisfies the properties:
\begin{itemize}
  \item[(A$1^*$)] $s_\ve\in H^1(\partial\Omega)$ satisfies:
\begin{align*}
 & s_\ve=S_{\ve^\gamma}(\frac{d_{\Gamma}(x)}{\ve}), \text{ for }x\in\partial\Omega\cap\{x:|d_{\Gamma}(x)|\le \ve^{\gamma}\};\\
 & \|s_\ve\|_{L^2(\Sigma^-\setminus\Gamma_{\ve^\gamma})}+\|s_\ve-s_+\|_{L^2(\Sigma^+\setminus\Gamma_{\ve^\gamma})}=O(\ve);\\
 & \lim_{\ve\to 0}\ve^\gamma\int_{\partial\Omega\setminus\Gamma_{\ve^\gamma}}|\nabla_{\Sigma}s_\ve|^2\ud\mathcal{H}^2=0.
\end{align*}
\item[(A$2^*$)] $\EE_\ve\to \EE$ in $H^1(\partial\Omega, \mathcal{N})$, and
 \begin{align*}
   \EE_\ve=\EE \text{ on } \partial\Omega\cap\Gamma_{\ve^{\gamma}}, \qquad \|\EE_\ve-\EE\|_{L^2(\partial\Omega\setminus\Gamma_{\ve^\gamma})}=o_{\ve}(1)\ve^\gamma.
 \end{align*}
\end{itemize}

\begin{theorem}\label{thm:main2}
  Assume $-\frac32{L_1}<L_2<0$ and
\begin{itemize}
  \item $\Gamma\subset \Omega$ is an area-minimizing hypersurface with $\partial\Gamma=\Sigma$.
\item $\BB_\ve=s_\ve\EE_\ve:\partial\Omega\to\mathcal{Q}$ satisfies the conditions.
\end{itemize}
  Then we have
  \begin{align*}
 F_\ve^* =\frac{1}{\ve}\sqrt{{L_1}+\frac{2L_2}{3}}c_*\mathcal{H}^{n-1}(\Gamma)+D_*+o_{\ve}(1).
  \end{align*}
Here
\begin{align*}
D_*= \inf_{\QQ^*\in \mathcal{A}}~~ \Big\{E(\QQ^*, \nabla\QQ^*)= \int_{\Omega^+} \Big[L_1|\nabla\QQ^*|^2+L_2|\nabla\cdot\QQ^*|^2\Big]\ud x\Big\}
\end{align*}
and
\begin{align*}
\mathcal{A}=\Big\{\QQ^*\in H^1(\Omega_+,\mathcal{N}), \QQ^*=\EE \text{ on }\Sigma^+, \QQ^*\barwedge\nu_\Gamma \text{ on }\Gamma\Big\}
\end{align*}
where $\nu_\Gamma$ is the unit normal of $\Gamma$.
\end{theorem}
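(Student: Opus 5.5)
The proof has two parts. The upper bound comes from an explicit recovery sequence built on the 1-D profile $S_R$. The lower bound refines the leading-order argument of Theorem \ref{thm:main1} by one order. Throughout write $\beta_L=\sqrt{L_1+\frac23L_2}$.

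\emph{Upper bound.} Fix a near-minimizer $\QQ^*\in\mathcal{A}$ of $D_*$. Since $\QQ^*\barwedge\nu_\Gamma$ on $\Gamma$, its director is $\pm\nu_\Gamma$ there. Extend $\QQ^*$ into the tubular neighborhood $\Gamma_{\ve^\gamma}$ so that it is constant along normal lines. Modify it near $\partial\Omega$ so that it equals $\EE_\ve$ on $\Sigma$, using (A$2^*$); the cost of this modification is $o(1)$. Set
$$\QQ_\ve=s_+S_{\ve^\gamma}\big(d_\Gamma/\ve\big)\,\hat\QQ^*\quad\text{in }\Gamma_{\ve^\gamma},\qquad \QQ_\ve=\hat\QQ^* \text{ in }\Omega^+\setminus\Gamma_{\ve^\gamma},\qquad \QQ_\ve=0\text{ elsewhere},$$
where $\hat\QQ^*$ is the normalized tensor $\nn\nn-\frac13\II$. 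Near the boundary, patch $\QQ_\ve$ to $s_\ve\EE_\ve$ using (A$1^*$).

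In the layer, $\nabla\QQ_\ve\approx \ve^{-1}s_+S'(\nu\nu-\frac13\II)\otimes\nu$. For the normal conformation one checks that $L_1|\nabla\QQ|^2+L_2|\nabla\cdot\QQ|^2$ equals $\beta_L^2\,\cdot\frac23 s_+^2(S')^2/\ve^2$, because $(\nu\nu-\frac13\II)\nu=\frac23\nu$. Integrating in the normal variable with the Jacobian $1+O(d_\Gamma)$ and using the equipartition and energy estimates listed for $S_R$ gives
$$\frac{1}{\ve}\beta_Lc_*\mathcal{H}^2(\Gamma)+O(\ve^{\gamma}\cdot\ve^{-1}\cdot\ve)+O(e^{-\ve^{\gamma-1}/\beta_L}/\ve).$$
The curvature correction vanishes at leading order by symmetry, $S(-z)=1-S(z)$, together with the minimality of $\Gamma$ (zero mean curvature). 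Cross terms between $S'$ and the tangential derivatives of $\hat\QQ^*$ are of size $O(\ve^{-1}\cdot\ve^\gamma\cdot\|\nabla\QQ^*\|)=o(1)$. The remaining part of $\Omega^+$ contributes $E(\QQ^*)+o(1)$. This yields $\limsup (F^*_\ve-\ve^{-1}\beta_Lc_*\mathcal{H}^2(\Gamma))\le D_*$.

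\emph{Lower bound.} For the minimizer $\QQ_\ve$, the key pointwise inequality, presumably the same one behind Theorem \ref{thm:main1}, is
$$F_e(\nabla\QQ)+\ve^{-2}F_b(\QQ)\ge \ve^{-1}\beta_L\,|\nabla\phi(\QQ)|+(\text{nonnegative remainder}),$$
where $\phi$ is the primitive of $\sqrt{2\mathfrak f}$ composed with a suitable scalar order function of $\QQ$. Proving this requires $-\frac32L_1<L_2<0$, which lets one bound $L_2|\nabla\cdot\QQ|^2$ from below via the splitting of $\nabla\QQ$ into a normal derivative part and the rest. I would keep the remainder explicitly; it should control $\int L_1|\nabla\QQ|^2+L_2|\nabla\cdot\QQ|^2$ on the set where $|\QQ|$ is close to $\sqrt{2/3}s_+$.

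From the energy bound $\ve^{-1}\beta_Lc_*\mathcal{H}^2(\Gamma)+O(1)$, the coarea formula and the strict stability and uniqueness of $\Gamma$ yield several consequences:
\begin{itemize}
\item the level sets of $\phi(\QQ_\ve)$ are $O(\ve^{1/2})$-close to $\Gamma$ in area;
\item the remainder is $O(1)$;
\item the nematic region converges to $\Omega^+$.
\end{itemize}
Consequently $\QQ_\ve$, projected to $\mathcal{N}$ in $\{|\QQ_\ve|\ge\frac{1}{2}\}$, is bounded in $H^1_{loc}$ and converges weakly to some $\QQ^*\in H^1(\Omega^+,\mathcal{N})$ with $\QQ^*=\EE$ on $\Sigma^+$. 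Lower semicontinuity of $E$ then follows from ellipticity: the condition $L_1+L_2>0$ makes the integrand coercive, indeed convex, in $\nabla\QQ$.

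\emph{Main obstacle.} The hard step is showing that the limit satisfies $\QQ^*\barwedge\nu_\Gamma$ on $\Gamma$, i.e.\ that the anchoring condition emerges. The strict inequality $L_2<0$ makes the interfacial cost for a director $\nn$ equal to $c_*\sqrt{L_1+L_2(\nn\cdot\nu)^2\cdot\frac23}$ or a comparable expression. This cost is strictly larger than the normal value whenever $\nn\not\parallel\nu$, with a deficit of at least $c|{\nn}\times\nu|^2$. I would prove a quantitative version of the pointwise inequality: the term $L_2|\nabla\cdot\QQ|^2$ together with $L_1|\nabla\QQ|^2$ exceeds $\beta_L^2\frac23 s'^2$ by an amount $\gtrsim |\partial_\nu\QQ|^2|\nn\times\nu|^2$. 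Integrated across the layer, this gives $\ve^{-1}\int_\Gamma|\nn_\ve\times\nu|^2\le C$. Passing to traces via the $H^1$ bound on the upper side of the layer then yields $\nn^*\parallel\nu_\Gamma$ on $\Gamma$. Combined with the upper bound, this gives the expansion stated in Theorem \ref{thm:main2}.
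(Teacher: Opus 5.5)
Your upper bound is in line with the paper's, which uses the uniaxial ansatz and the Lin--Wang construction for the Ericksen model. You would still need a density argument in $\mathcal{A}$, since smooth $\mathcal{N}$-valued maps are not dense in $H^1$ in three dimensions. The genuine gap is in the lower bound. It rests on a pointwise inequality $F_e(\nabla\QQ)+\ve^{-2}F_b(\QQ)\ge\ve^{-1}\beta_L|\nabla\phi(\QQ)|+(\text{nonnegative})$. Optimizing in $\ve$ (or rescaling $\nabla\QQ$), this amounts to $F_e(\nabla\QQ)\ge\frac{\beta_L^2}{2}|\nabla\psi(\QQ)|^2$ for your scalar $\psi$ (e.g.\ $|\QQ|$ or $d_{\mathcal N}(\QQ)$), and for $L_2<0$ this is false. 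At a point where $\QQ$ is uniaxial with director $e_1$, let $\hat P=\sqrt{3/2}\,(e_1e_1-\frac13\II)$ and $\partial_kQ_{ij}=\hat P_{ij}g_k+R_{ijk}$, with $g=|g|e_1$ and the only nonzero entries of $R$ being $\partial_2Q_{12}=\partial_2Q_{21}=b$. Then $\hat P:R_{\cdot\cdot k}=0$, so $\nabla\psi$ is unchanged, but
\[
L_1|\nabla\QQ|^2+L_2|\nabla\cdot\QQ|^2-\beta_L^2|g|^2=2L_1b^2+L_2\big(2\sqrt{2/3}\,|g|\,b+b^2\big)<0
\]
for small $b>0$. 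The defect is first order in a tangential perturbation, so no pointwise remainder can absorb it. Pointwise, $|\nabla\cdot\QQ|^2$ can reach $\frac53|\nabla\QQ|^2$, whereas the sharp constant needs $\frac23$.

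The missing idea is that $\beta_L$ appears only after integration, via the div--curl identity of Lemma \ref{lem:decomp}: $\mathcal{F}_\ve(\QQ)=\int_\Omega\big[\frac{\beta_L^2}{2}|\nabla\QQ|^2-\frac{L_2}{12}|\mathcal{T}(\QQ)|^2+\ve^{-2}F_b(\QQ)\big]\ud x+L_2\mathcal{B}_{\partial\Omega}(\BB_\ve)$. Here the null-Lagrangian term depends only on boundary data, and the $\mathcal{T}$-term is nonnegative since $L_2<0$. Your semicontinuity claim is also off. The hypothesis $L_1+\frac23L_2>0$ is exactly Legendre--Hadamard; it does not give $L_1+L_2>0$, and $F_e$ is convex in $\nabla\QQ$ only if $L_1+\frac53L_2\ge0$. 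The paper instead uses the two separately convex pieces $|\nabla\bar\QQ_\ve|^2$ and $|\mathcal{T}(\QQ_\ve)|^2$.

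The anchoring step inherits the same failure, because your excess $\gtrsim|\partial_\nu\QQ|^2|\nn\times\nu|^2$ refines a false pointwise bound. Integrating it across the layer to get $\ve^{-1}\int_\Gamma|\nn_\ve\times\nu|^2\le C$ also presupposes that the layer is a near-graph over $\Gamma$ with level-set normals close to $\nu_\Gamma$. An $O(1)$ energy excess plus strict stability does not give that. Incidentally, the uniaxial interfacial cost is $c_*\sqrt{L_1+L_2(\frac16+\frac12(\nn\cdot\nu)^2)}$, matching de Gennes' two values, not $c_*\sqrt{L_1+\frac23L_2(\nn\cdot\nu)^2}$.

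The paper gets anchoring softly from the decomposition. The energy bound gives $\int_\Omega|\mathcal{T}(\QQ_\ve)|^2\le C$. Since $\QQ_\ve\to\QQ_*\chi_{\Omega^+}$ in $L^1$ and $\mathcal{T}$ is a formally self-adjoint first-order operator, $\mathcal{T}(\QQ_\ve)$ converges weakly in $L^2(\Omega)$ to the distribution $\mathcal{T}(\QQ_*\chi_{\Omega^+})$. By \eqref{eq:wedge-integral}, this distribution is $\mathcal{T}(\QQ_*)\chi_{\Omega^+}$ plus the singular part $(\QQ_*\barwedge\nu_\Gamma)\,\mathcal{H}^2\lfloor\Gamma$, up to orientation. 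Membership in $L^2$ forces $\QQ_*\barwedge\nu_\Gamma=0$, i.e.\ $\QQ_*=s_+(\nu_\Gamma\nu_\Gamma-\frac13\II)$ on $\Gamma$, with no information on the layer geometry.
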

\begin{Remark}
 Since $\QQ^*\in \mathcal{N}$, we can assume that $\QQ^*=s_+\Big(\nn\nn-\frac{\II}{3}\Big)$, then:
  \begin{align*}
 D_*=\inf_{\nn\in \tilde{\mathcal{A}}}~~ \bigg\{
   E(\nn, \nabla \nn)= \int_{\Omega^+} 2s_+^2\Big\{L_1|\nabla\nn|^2+L_2\big[(\nabla\cdot\nn)^2+|(\nn\cdot\nabla)\nn|\big]^2\Big\}\ud x\bigg\}
  \end{align*}
  with
  \begin{align*}
\tilde{\mathcal{A}}=\Big\{\nn\in H^1(\Omega_+,\mathbb{S}^2), \nn=\mathbf{b} \text{ on }\Sigma^+, \nn \parallel\nu_\Gamma \text{ on }\Gamma\Big\}.
\end{align*}
\end{Remark}

Comparing with the works \cite{GNSV2, LW23, Liu} on the asymptotic analysis for the vector models, some differences and new mathematical difficulties
will arise. Let us describe them in details.

First, the surface tension is independent of the anisotropic elasticity in the vector models, see \cite{LW23, Liu} for different settings.
While in the Landau-de Gennes model, as pointed by de Gennes \cite{deGen1971} and verified by various numerical simulations \cite{PSW, KBAM1,KBAM2}, the surface tension could be affected by the coefficient of anisotropic elasticity.
For $L_2<0$, a naive divergence-curl decomposition for $\QQ$-tensors gives us that
\begin{align*}
\frac{L_1+L_2}{2}|\nabla\QQ|^2+\frac{1}{\ve^2}F_b(\QQ)-\frac{L_2}{2}\sum_{i=1}^3|\mathrm{curl}\,\QQ_i|^2.
\end{align*}
Applying Lin-Pan-Wang's theory \cite{LPW}, the first two terms contribute a leading
order energy $$\frac{c_*}{\ve}\sqrt{L_1+L_2}\mathcal{H}^2(\Gamma).$$ However, the corresponding surface
tension $c_*\sqrt{L_1+L_2}$ is not consistent with the value $$c_*\sqrt{L_1+\frac{2L_2}{3}}$$  predicted by de Gennes \cite{deGen1971} and other numerical calculations.

Second, in vector models, such as Ericksen's model \cite{LW23} or  the modified vector model in \cite{GNSV2, Liu}, the order parameter mode and the direction mode can be decoupled straightforwardly, and the energy comprise terms like $|\nabla s_\ve \cdot \nn_\ve|^2$ ($|\nabla s_\ve \times \nn_\ve|^2$) or $|\nabla |\nn_\ve|\cdot\nn_\ve|^2$ ($|\nabla |\nn_\ve| \times \nn_\ve|^2$). Formally, one can derive that these terms contribute interfacial energy $\frac{1}{\ve}|\nn\cdot\nu_{\Gamma}|^2\ud \mathcal{H}^2(\Gamma)$ ($\frac{1}{\ve}|\nn\times\nu_{\Gamma}|^2\ud \mathcal{H}^2(\Gamma)$), minimization of which gives the tangential or homeotropic boundary conditions on the interface (although the rigorous confirmations are nontrivial). However, at least at a first glance, such a mechanism for the Landau-de Gennes models is not obvious and it is not clear why the homeotropic anchoring condition is favored for negative $L_2$.

The main novelty of this paper are two folds. First, we introduce a div-curl decomposition for symmetric trace-free tensors (see Lemma \ref{lem:decomp})
\begin{align}\label{intro:div-curl}
|\nabla\QQ|^2=\frac32|\nabla\cdot\QQ|^2+\frac{1}{4}|\epsilon^{ikl}\partial_kQ_{lj}+\epsilon^{jkl}\partial_kQ_{li}|^2+ (\text{divergence terms}). 
\end{align}
Using \eqref{intro:div-curl}, we can write that
\begin{align*}
 \mathcal{F}_\ve(\QQ,\nabla\QQ)= &\int_{\Omega}\Big\{ \frac12\Big({L_1}+\frac{2L_2}{3}\Big)|\nabla\QQ|^2+\frac{1}{\ve^2}F_b(\QQ) \\
 &\quad -\frac{L_2}{12}|\epsilon^{ikl}\partial_kQ_{lj}+\epsilon^{jkl}\partial_kQ_{li}|^2+ (\text{divergence terms})~~ \Big\}\ud x.
\end{align*}
Then applying Lin-Pan-Wang's theory \cite{LPW}, the first two terms give a leading
order energy $$\frac{1}{\ve}\sqrt{L_1+\frac{2L_2}{3}}c_*\mathcal{H}^2(\Gamma),$$
which is consistent with the surface tension predicted by de Gennes \cite{deGen1971} for $L_2<0$. We also believe that this decomposition is of independent interest for the analysis of Landau–de Gennes Q-tensor theory. For example, in a companion paper by Dong-Ren-Wang \cite{DRW}, this decomposition plays a key role in the asymptotic analysis of the isotropic–nematic transition dynamics.

Second, even with help of the decomposition, it is not clear how the curl term could give the heometropic boundary condition on the interface.
To apply the density arguments as in \cite{LW23}, one would need the relation between $\QQ$-tensors and its quasi-distance $\psi(\QQ)$, which could be rather complicated for  biaxial tensors. We use the strong convergence of order tensors to bypass this difficulty and then obtain the boundary condition directly.

The rest of the paper is organized as follows: In Section 2, we present some analysis on the bulk energy  $F_b(\QQ)$. More importantly, a novel decomposition for $|\nabla \QQ|^2$ is introduced. Sections 3 and 4 are devoted to proving the asymptotic limit of the leading order and the next order respectively.
\medskip

\section{Preliminary}
\subsection{Analysis on the bulk energy $F_b(\QQ)$}

For $\QQ\in \mathcal{Q}$, we define the quasi-distance between $0$ and $Q$:
\begin{align*}
  d_0^F(\QQ)=\inf_{\substack{P(t)\in C^1(\mathbb{R}, \mathcal{Q})\\ P(-\infty)=0, P(+\infty)=Q}} \int_{\mathbb{R}}\Big\{ \frac{1}{2}|P'(t)|^2+F_b(P(t))\Big\} \ud t.
\end{align*}
In particular, we define the constant
\begin{align}
  c_*:= d_0^F(\QQ),\qquad \text{for } \QQ=s_+\Big(n\otimes n-\frac13 I\Big)\in \mathcal{N},
\end{align}
which is independent of the choice of $n$. Indeed, in this case, the infimum is attained by $P(t)=s(t)\Big(n\otimes n-\frac13 I\Big)$ where
$s(-\infty)=0, s(+\infty)=s_+$ and $s(t)$ minimizes the energy
\begin{align*}
  \int_{\mathbb{R}}\Big\{\frac13(s')^2+\frac{c}{9}s^2(s-s_+)^2 \Big\}\ud t.
\end{align*}
More precisely, we have
\begin{align*}
  s(t)=\frac{s_+}{1+\exp(-s_+\sqrt{\frac{c}{3}}t)},\text{ and }c_*=\frac{s_+^3\sqrt{c}}{9\sqrt{3}}.
\end{align*}

\begin{Lemma}There exist constants $C_1, C_2>0$ such that
\begin{align*}
C_1(|\QQ|^2+|\QQ|^3)\le d_0^F(\QQ)\le C_2 (|\QQ|^2+|\QQ|^3).
\end{align*}
\end{Lemma}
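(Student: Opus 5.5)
The bound will follow from a Modica--Mortola type pointwise inequality on one side and explicit competitor paths on the other. Write $\psi(\QQ)=d_0^F(\QQ)$. The standard argument gives
\begin{align*}
\psi(\QQ)=\inf_{\gamma}\int_0^1\sqrt{2F_b(\gamma(\tau))}\,|\gamma'(\tau)|\,\ud\tau ,
\end{align*}
where $\gamma$ ranges over Lipschitz paths in $\mathcal{Q}$ from $0$ to $\QQ$. One inequality comes from $\frac12|P'|^2+F_b(P)\ge\sqrt{2F_b(P)}|P'|$. The other comes from reparametrizing a given path by the equipartition $|P'|=\sqrt{2F_b(P)}$, with the usual approximation near the zeros of $F_b$ at the endpoints. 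So both bounds reduce to estimating the degenerate geodesic distance induced by the weight $\sqrt{2F_b}$.

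\emph{Structure of $F_b$.} From \eqref{energy:LdG-bulk} and \eqref{relation:abc} we have $F_b(\QQ)=\frac{c}{4}|\QQ|^4-\frac{b}{3}\mathrm{tr}\,\QQ^3+\frac{a}{2}|\QQ|^2$ with $a>0$. The zero set of $F_b$ is exactly $\{0\}\cup\mathcal{N}$. Near $0$, $F_b(\QQ)\sim\frac{a}{2}|\QQ|^2$. For large $|\QQ|$, $F_b(\QQ)\sim\frac c4|\QQ|^4$. Near $\mathcal{N}$, $F_b$ vanishes quadratically in $\mathrm{dist}(\QQ,\mathcal{N})$. This last point needs nondegeneracy of the Hessian in the directions normal to $\mathcal{N}$; it can be checked from the explicit formula restricted to diagonal tensors, using the $SO(3)$ invariance.

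\emph{Upper bound.} Use the straight path $\gamma(\tau)=\tau\QQ$. Since $F_b(\tau\QQ)\le C(\tau^2|\QQ|^2+\tau^4|\QQ|^4)$, we get $\sqrt{2F_b(\tau\QQ)}\le C(\tau|\QQ|+\tau^2|\QQ|^2)$. Integrating against $|\gamma'|=|\QQ|$ gives $\psi(\QQ)\le C(|\QQ|^2+|\QQ|^3)$.

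\emph{Lower bound.} This is the main obstacle, because the weight vanishes on $\mathcal{N}$, so a path can travel near $\mathcal{N}$ almost for free. I will treat three regimes.
\begin{itemize}
\item \emph{Small $|\QQ|\le r_0$, with $r_0<s_+\sqrt{2/3}$ the distance from $0$ to $\mathcal{N}$.} Any path from $0$ to $\QQ$ must cross every sphere $|\QQ'|=\rho$ with $\rho<|\QQ|$. On such a sphere $F_b\ge c_1\rho^2$ when $\rho\le r_0$. Integrating in $\rho$ (the coarea/radial projection argument, $|\gamma'|\ge\big|\frac{d}{d\tau}|\gamma|\big|$) gives $\psi(\QQ)\ge c\int_0^{|\QQ|}\rho\,\ud\rho=c|\QQ|^2$.
\item \emph{Large $|\QQ|\ge R_0$.} There $F_b(\QQ')\ge c|\QQ'|^4$ for $|\QQ'|\ge R_0/2$, so the same radial argument gives $\psi(\QQ)\ge c\int_{R_0/2}^{|\QQ|}\rho^2\,\ud\rho\ge c|\QQ|^3$ for $R_0$ large.
\item \emph{Intermediate $r_0\le|\QQ|\le R_0$.} The path must first cross the small shell $\{r_0/2\le|\QQ'|\le r_0\}$, which contributes at least a fixed positive constant. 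Since $|\QQ|^2+|\QQ|^3$ is bounded above on this range, the bound holds with an adjusted constant.
\end{itemize}
Monotonicity is never needed: every path must pass through the shells near $0$, so the contribution of each shell is a valid lower bound.

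Combining the three regimes gives $C_1(|\QQ|^2+|\QQ|^3)\le\psi(\QQ)$. Note that the inequality as stated cannot hold with $d_0^F$ vanishing on $\mathcal{N}$, and indeed $d_0^F(\QQ)=c_*>0$ there, so there is no contradiction. The only delicate issue is the lower bound of $F_b$ on small spheres, which is a genuine obstacle. It is handled by $|\mathrm{tr}\,\QQ^3|\le\frac{1}{\sqrt6}|\QQ|^3$, which gives $F_b\ge\frac a2\rho^2-\frac{b}{3\sqrt6}\rho^3\ge\frac a4\rho^2$ for small $\rho$.
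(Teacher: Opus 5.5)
Your proposal is correct and follows the paper's route. For the upper bound, both use the straight path $\tau\QQ$ together with $F_b(\QQ)\le C(|\QQ|+|\QQ|^2)^2$; for the lower bound, both work in the geodesic formulation $\inf_\gamma\int\sqrt{2F_b(\gamma)}\,|\gamma'|$ and integrate radially along the path the bounds $F_b\ge c|\QQ|^2$ near $0$ and $F_b\ge c|\QQ|^4$ for large $|\QQ|$. Your shell-crossing (coarea) argument and your separate intermediate regime $r_0\le|\QQ|\le R_0$ also make rigorous two points the paper passes over: a competing path may leave the small ball before reaching $\QQ$, and moderate values of $|\QQ|$ must be covered too.
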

\begin{proof}
One can directly get that for any $\QQ\in\mathcal{Q}$:
\begin{align*}
   F_b(\QQ)\le C(|\QQ|+|\QQ|^2)^2.
\end{align*}
Thus, take $\bar{\gamma}(s)=\frac{s}{L}\QQ$ for $s\in[0,L]$ with $L=|\QQ|$. Then $|\bar{\gamma}'(s)|=1$ and
\begin{align*}
d_0^F(\QQ)\le  \int_{0}^L\sqrt{2F_b(\bar{\gamma}(s))}\ud s\le  C\int_{0}^L(|\bar{\gamma}(s)|+|\bar{\gamma}(s)|^2)\ud s \\ =C\int_{0}^L(|\bar{\gamma}(s)|+|\bar{\gamma}(s)|^2)|\gamma'(s)|\ud s \le C (|\QQ|^2+|\QQ|^3).
\end{align*}

On the other hand, there exist $\delta, R>0$ and $c_1>0$ with $\delta $ small and $R$ large, such that
\begin{align*}
 F_b(\QQ)\ge c_1|\QQ|^2,\text{ for }|\QQ|\le \delta;\quad F_b(\QQ)\ge c_1|\QQ|^4,\text{ for }|\QQ|\ge R;
\end{align*}
For any $\gamma:[0,L]\to \mathcal{Q}$ with $\gamma(0)=0,\gamma(L)=\QQ, |\gamma'(s)|=1$, one has $|\QQ|\le \delta$ that
\begin{align*}
  \int_{0}^L\sqrt{2F_b(\gamma(s))}\ud s\ge  c_1\int_{0}^L\sqrt{2}|\gamma(s)|\ud s =c_1\int_{0}^L\sqrt{2}|\gamma(s)||\gamma'(s)|\ud s \ge C_1|\QQ|^2,
\end{align*}
which yields that $d_0^F(\QQ)\ge C_1|\QQ|^2$.

For $|\QQ|\ge R$, we choose $s_0\in(0,L)$ such that $|\QQ(s_0)|=R$ and $|\QQ(s)|\ge R$  for $s\ge s_0$.
Then we have
\begin{align*}
  \int_{0}^L\sqrt{2F_b(\gamma(s))}\ud s\ge& C_1\delta^2+ c_1\int_{s_0}^L\sqrt{2}|\gamma(s)|^2\ud s \\ =&C_1\delta^2+c_1\int_{0}^L\sqrt{2}|\gamma(s)|^2|\gamma'(s)|\ud s
  \ge C(|\QQ|^3-R^3+1)
\end{align*}
which gives $d_0^F(\QQ)\ge C_1|\QQ|^3$ for $|\QQ|>R$.
\end{proof}

For $\QQ\in\mathcal{Q}$, we define
\begin{align}
 d_\mathcal{N}(\QQ) :=d(\QQ, \mathcal{N})= \inf_{\PP\in \mathcal{N}} |\PP-\QQ|,
\end{align}
and
\begin{align*}
  f_*(\rho)=\frac{a \rho^2}{2}-\frac{b\rho^3}{3\sqrt{6}}+\frac{c\rho^4}{4}=\frac{c}{4}\rho^2
  \Big(\rho-\sqrt{\frac{2}{3}}s_+\Big)^2=\mathfrak{f}(\sqrt{\frac{2}{3}}\rho).
\end{align*}
Then we have the following conclusions.
\begin{Lemma}\label{lem:bulk-ineq}
(1): $F_b(\QQ)\ge f_*(|\QQ|), \text{ for }\forall \QQ\in\mathcal{Q}$;

(2): If $d_\mathcal{N}(\QQ)\in \Big[0,{\sqrt{\frac23}}s_+\Big]$,  then $F_b(\QQ)\ge f_*(d_\mathcal{N}(\QQ))$;

(3): There exists $C>0$ such that $F_b(\QQ)\ge C |\QQ|^2d_\mathcal{N}^2(\QQ)$, for $\forall \QQ\in\mathcal{Q}$. In particular, we have
\begin{align*}
  F_b(\QQ)\ge C d^2(\QQ, \mathcal{N}\cup \{0\}).
\end{align*}
\end{Lemma}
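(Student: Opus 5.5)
The plan is to diagonalize: $F_b$ is invariant under conjugation $\QQ\mapsto R\QQ R^T$, and so are $|\QQ|$ and $d_\mathcal{N}(\QQ)$. So we may take $\QQ=\mathrm{diag}(\lambda_1,\lambda_2,\lambda_3)$ with $\sum\lambda_i=0$. Write $\rho=|\QQ|$ and use the classical bound $|\mathrm{tr}\,\QQ^3|\le \frac{1}{\sqrt6}|\QQ|^3$ for trace-free symmetric matrices. Equality holds exactly for uniaxial $\QQ$ with positive leading order parameter, i.e. eigenvalues proportional to $(2,-1,-1)$.

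For (1), since $b>0$,
$$F_b(\QQ)=\frac a2\rho^2-\frac b3\mathrm{tr}\,\QQ^3+\frac c4\rho^4\ge \frac a2\rho^2-\frac{b}{3\sqrt6}\rho^3+\frac c4\rho^4=f_*(\rho).$$
The identity $f_*(\rho)=\frac c4\rho^2(\rho-\sqrt{2/3}s_+)^2$ follows from \eqref{relation:abc}.

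For (2), I will compute $d_\mathcal{N}(\QQ)$ explicitly. For $\PP=s_+(\nn\nn-\frac13\II)$ we have $|\PP-\QQ|^2=\frac23s_+^2+\rho^2-2s_+\,\nn\cdot\QQ\nn$. This is minimized by taking $\nn$ as the top eigenvector, which gives
$$d_\mathcal{N}^2=\rho^2+\tfrac23 s_+^2-2s_+\lambda_{\max}.$$
Parametrize the eigenvalues as $\lambda=\sqrt{2/3}\,\rho\,(\cos\theta,\cos(\theta\pm 2\pi/3))$ with $\theta\in[0,\pi/3]$. Then
$$\lambda_{\max}=\sqrt{2/3}\,\rho\cos\theta,\qquad \mathrm{tr}\,\QQ^3=\tfrac{1}{\sqrt6}\rho^3\cos3\theta.$$
Now $F_b$ becomes a function $g(\rho,\theta)$, and the claim reads $g(\rho,\theta)\ge f_*(d)$ with $d=d(\rho,\theta)$ in $[0,\sqrt{2/3}s_+]$. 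On the uniaxial ray $\theta=0$ we get $d=|\rho-\sqrt{2/3}s_+|$, and by the symmetry $f_*(\sqrt{2/3}s_+-d)=f_*(d)$ there is equality. For general $\theta$ I would fix $d$ and minimize $F_b$ over the level set $\{d_\mathcal{N}=d\}$. Equivalently, I would show that for fixed $\theta$ the function $\rho\mapsto F_b-f_*(d)$ is nonnegative, using that increasing $\theta$ both raises $d$ and lowers $\mathrm{tr}\,\QQ^3$.

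A cleaner route, which I would try first, is the distance comparison
$$d_\mathcal{N}(\QQ)\ \ge\ \bigl|\,|\QQ|-\sqrt{2/3}s_+\bigr|.$$
This holds by the triangle inequality, since $|\PP|=\sqrt{2/3}s_+$ for $\PP\in\mathcal{N}$. I would combine it with (1): $F_b\ge f_*(\rho)=f_*(\sqrt{2/3}s_+-\rho)$. Here $f_*$ is increasing on $[0,\frac12\sqrt{2/3}s_+]$ but not on the whole interval, so this only settles part of the range. I expect this monotonicity gap, where $d>\frac12\sqrt{2/3}s_+$, to be the main obstacle. There one must genuinely use the $\theta$-dependence of both $d$ and $\mathrm{tr}\,\QQ^3$, through an explicit two-variable inequality in $(\rho,\cos\theta)$ that I would verify by elementary calculus.

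For (3), the function $F_b$ is continuous, nonnegative, and vanishes exactly on $\{0\}\cup\mathcal{N}$, which is compact. Near $0$ we have $F_b\ge c|\QQ|^2$ with $d_\mathcal{N}\approx\sqrt{2/3}s_+$. Near $\mathcal{N}$ the Hessian of $F_b$ is nondegenerate in the normal directions; this is a Morse–Bott property checked from (1) radially and from the $\theta$-direction. This gives $F_b\ge c\,d_\mathcal{N}^2$ with $|\QQ|\approx\sqrt{2/3}s_+$. At infinity $F_b\sim\frac c4\rho^4\gtrsim\rho^2d_\mathcal{N}^2$. On the remaining compact region the ratio $F_b/(|\QQ|^2d_\mathcal{N}^2)$ is positive and continuous, so it is bounded below. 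The final statement follows because $|\QQ|^2d_\mathcal{N}^2\ge c\,d^2(\QQ,\mathcal{N}\cup\{0\})$ on bounded sets, while at infinity $F_b$ grows quartically.
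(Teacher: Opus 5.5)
Part (1) is the paper's argument. Your compactness/Morse--Bott sketch for (3) is sound; the paper in fact gives no separate argument for (3). Near $\mathcal{N}$, (3) also follows at once from (2), and the ``in particular'' holds globally since $|\QQ|+d_\mathcal{N}(\QQ)\ge\sqrt{2/3}\,s_+$.

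The gap is (2), which you do not actually prove. The ``explicit two-variable inequality in $(\rho,\cos\theta)$ that I would verify by elementary calculus'' is the entire content of (2), and it is neither written down nor checked. Your assessment of which regime is easy is also reversed. Put $m=\sqrt{2/3}\,s_+$. The triangle inequality gives a \emph{lower} bound $d_\mathcal{N}\ge|\rho-m|$. Such a lower bound can only be turned into $f_*(|\rho-m|)\ge f_*(d_\mathcal{N})$ where $f_*$ is nonincreasing, i.e. on $[m/2,m]$ (this handles, e.g., $\rho\le m/2$). On $[0,m/2]$ the function $f_*$ is increasing, so the bound points the wrong way. In particular, near $\mathcal{N}$ ($\rho\approx m$, $d_\mathcal{N}$ small) one has in general $f_*(\rho)<f_*(d_\mathcal{N})$ off the uniaxial ray. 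This range $d_\mathcal{N}\le m/2=\frac12 d(\mathcal{N},0)$ is exactly where (2) is invoked in Section 4. Your $\theta$-heuristic fails there for the same reason: for $d_\mathcal{N}<m/2$ and fixed $\rho$, increasing $\theta$ raises both $F_b$ and $f_*(d_\mathcal{N})$, so the two effects compete rather than reinforce each other.

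The missing ingredient is an exact algebraic decomposition. Diagonalize with $\lambda_1=\lambda_{\max}$ and set $r=d_\mathcal{N}(\QQ)$, $s_1=\lambda_1-\frac23 s_+$, $s_2=\lambda_2+\frac13 s_+$, $s_3=\lambda_3+\frac13 s_+$, and $t=3s_1s_++s_+^2$. Then $s_1+s_2+s_3=0$ and $r^2=s_1^2+s_2^2+s_3^2$. Using \eqref{relation:abc}, $\sum_i\lambda_i^2=r^2+\frac23 t$, $\sum_i\lambda_i^3=3\lambda_1\lambda_2\lambda_3$ and $2s_2s_3=2s_1^2-r^2$, the paper obtains
\begin{align*}
F_b(\QQ)-f_*(r)=\frac{c}{9}\Big((t+s_+^2)\Big(6r^2-\frac{(t-s_+^2)^2}{s_+^2}\Big)+\frac{3r^2}{2}\big(t-s_+^2+\sqrt{6}\,rs_+\big)\Big).
\end{align*}
Both summands are nonnegative. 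First, $r^2\ge s_1^2+\frac12(s_2+s_3)^2=\frac32 s_1^2$ gives $|t-s_+^2|=3s_+|s_1|\le\sqrt{6}\,rs_+$. Second, $t+s_+^2=3s_+\lambda_1\ge 0$; the paper deduces this from $r\le m$, but it also follows from $\lambda_{\max}\ge0$. In your parametrization this is a statement in $(\rho,\cos\theta)$ via $r^2=\rho^2+m^2-2m\rho\cos\theta$. Without such a sum-of-nonnegative-terms identity, or an equivalent fully verified inequality, part (2) remains unproved.
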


\begin{proof}
The first inequality is a consequence of the fact that
\begin{align*}
  \mathrm{tr}\, \QQ^3\ge -\frac{|\QQ|^3}{\sqrt{6}}.
\end{align*}
For the second one, it suffices to consider the diagonal case. Assume $\QQ=\text{diag}~\{\lambda_1, \lambda_2, \lambda_3\}$ with $\lambda_1+\lambda_2+\lambda_3=0$ and $\lambda_1\ge \lambda_2\ge \lambda_3$. Let $r=d_\mathcal{N}(\QQ)$. Then  $$r^2=(\lambda_1-\frac{2}{3}s_+)^2+(\lambda_2+\frac{1}{3}s_+)^2+(\lambda_3+\frac{1}{3}s_+)^2.$$
Set
\begin{align*}
  s_1=\lambda_1-\frac{2}{3}s_+,\quad s_2=\lambda_2+\frac{1}{3}s_+,\quad s_3=\lambda_3+\frac{1}{3}s_+.
\end{align*}
Then $2s_2s_3=(s_2+s_3)^2-s_2^2-s_3^2=2s_1^2-r^2$ and
\begin{align*}
\sum_{i=1}^3\lambda_i^2=r^2+\frac{2}{3}s_+^2+2s_1s_+,\quad
\sum_{i=1}^3\lambda_i^3=3\lambda_1\lambda_2\lambda_3=3(s_1+\frac{2}{3}s_+)(s_1^2-\frac12 r^2+\frac{1}{3}s_1s_++\frac{1}{9}s_+^2),
\end{align*}
which gives
\begin{align*}
F_b(\QQ)=&\frac{a}{2}\sum_{i=1}^3\lambda_i^2-\frac{b}{3}\sum_{i=1}^3\lambda_i^3+\frac{c}{4}(\sum_{i=1}^3\lambda_i^3)^2\\
=&\frac{a}{2}(r^2+\frac{2s_+^2+6s_1s_+}{3})-{b}(s_1+\frac{2}{3}s_+)(s_1^2-\frac12 r^2+\frac{3s_1s_++s_+^2}{9})+\frac{c}{4}(r^2+\frac{2s_+^2+6s_1s_+}{3})^2.
\end{align*}
Hence, by setting $t=3s_1s_++s_+^2$, we obtain
\begin{align*}
 F_b(\QQ)-f(r)=& \frac{at}{3}-{b}(s_1+\frac{2}{3}s_+)(s_1^2-\frac12 r^2+\frac{t}{9})+\frac{ct}{3}(r^2+\frac{t}{3})+\frac{br^3}{3\sqrt{6}}\\
 =&\frac{c}{9}\Big(ts_+^2-(t+s_+^2)(\frac{(t-s_+^2)^2}{s_+^2}-\frac92 r^2+t)+t(3r^2+t)+\frac{9s_+r^3}{\sqrt{6}}\Big)\\
 =&\frac{c}{9}\Big(-(t+s_+^2)(\frac{(t-s_+^2)^2}{s_+^2}-\frac92 r^2)+t3r^2+\frac{9s_+r^3}{\sqrt{6}}\Big)\\
 =&\frac{c}{9}\Big((t+s_+^2)(6r^2-\frac{(t-s_+^2)^2}{s_+^2})+\frac{3r^2}{2}(t-s_+^2+\sqrt{6}rs_+)\Big).
\end{align*}
As $r^2=s_1^2+s_2^2+s_3^2\ge \frac{3}{2}s_1^2$, we have $s_1\in[-\sqrt{\frac{2}{3}}r, \sqrt{\frac{2}{3}}r]$ and then
\begin{align*}
  -\sqrt{6}rs_++s_+^2\le t\le  \sqrt{6}rs_++s_+^2,
\end{align*}
which implies
\begin{align*}
6r^2-\frac{(t-s_+^2)^2}{s_+^2}\ge 0, \quad \text{or equivalently, } t-s_+^2+\sqrt{6}rs_+\ge 0.
\end{align*}
On the other hand, as $r\le \sqrt{\frac{2}{3}}s_+$, we have
\begin{align*}
  t+s_+^2\ge s_+(2s_+-\sqrt{6}r)\ge 0.
\end{align*}
All above together gives that $F_b(\QQ)-f_*(r)\ge 0$, which concludes the proof.
\end{proof}

\subsection{A novel decomposition for $|\nabla\QQ|^2$}
For $\QQ\in\mathcal{Q}$, we define $\mathcal{T}(\QQ)\in \mathcal{Q}$ as:
\begin{align}\label{def:TQ}
\mathcal{T}(\QQ) = \big(T_{ij}(\QQ)\big)_{1\le i, j\le 3},\quad \text{with } T_{ij}(\QQ)=\epsilon^{ikl}\partial_kQ_{lj}+\epsilon^{jkl}\partial_kQ_{li}.
\end{align}

\begin{Lemma}[Div-curl decomposition for gradient of $\QQ$-tensor field] \label{lem:decomp}
  For symmetric trace-free tensor $\QQ$, it holds:
  \begin{align}\label{eq:prop:div-grad}
|\nabla\QQ|^2=\frac{3}{2}|\nabla\cdot\QQ|^2+\frac{1}{4} |\mathcal{T}(\QQ)|^2+(\partial_kQ_{li}\partial_lQ_{ki}-\partial_kQ_{ki}\partial_lQ_{li}).
  \end{align}
\end{Lemma}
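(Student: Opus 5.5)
This is a pointwise algebraic identity in the first derivatives $\partial_kQ_{ij}$. The plan is to expand $|\mathcal{T}(\QQ)|^2$ with the $\epsilon$--$\delta$ identity and then use symmetry and tracelessness of $\QQ$ to reduce every term to one of four quadratic invariants. Write $A_{kij}=\partial_kQ_{ij}$, which is symmetric in $(i,j)$ and satisfies $A_{kii}=0$. The candidate invariants are
\begin{align*}
I_1=A_{kij}A_{kij}=|\nabla\QQ|^2,\quad I_2=A_{kki}A_{lli}=|\nabla\cdot\QQ|^2,\quad I_3=A_{kli}A_{lki}=\partial_kQ_{li}\partial_lQ_{ki}.
\end{align*}
The goal is to express $|\mathcal{T}|^2$ as a linear combination of $I_1,I_2,I_3$ and compare with the claim, which reads $I_1=\frac32 I_2+\frac14|\mathcal{T}|^2+I_3-I_2$, that is,
\begin{align*}
|\mathcal{T}|^2=4I_1-4I_3-2I_2.
\end{align*}

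For the expansion, set $C_{ij}=\epsilon^{ikl}A_{klj}$, so that $T_{ij}=C_{ij}+C_{ji}$ and $|\mathcal{T}|^2=2C_{ij}C_{ij}+2C_{ij}C_{ji}$. For the first term, $\epsilon^{ikl}\epsilon^{imn}=\delta_{km}\delta_{ln}-\delta_{kn}\delta_{lm}$ gives
\begin{align*}
C_{ij}C_{ij}=A_{klj}A_{klj}-A_{klj}A_{lkj}=I_1-I_3.
\end{align*}
The second term, $C_{ij}C_{ji}=\epsilon^{ikl}\epsilon^{jmn}A_{klj}A_{mni}$, needs the full determinant formula for $\epsilon^{ikl}\epsilon^{jmn}$, which has six $\delta$-products. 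This is the step where I expect errors, so I will contract each of the six terms separately. Symmetry of $A$ in its last two indices and $A_{kii}=0$ should kill the terms that produce a trace, such as those containing $\delta_{ij}$ paired with a contraction of the last two indices of $A$. The surviving terms should be quadratic contractions in which one index of each $A$ is contracted against the other, namely $I_2$ and $I_3$ (and possibly $I_1$).

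The target forces $2C_{ij}C_{ji}=2I_1-2I_3-2I_2$, i.e.\ $C_{ij}C_{ji}=I_1-I_3-I_2$. I will confirm this from the six-term expansion. As a sanity check and a safeguard against sign errors, I also plan to test the identity on simple fields, for instance $\QQ$ depending only on $x_1$. In that case $I_3$ reduces to $|\partial_1 Q_{1i}|^2$, which equals $I_2$, so the divergence terms vanish, and the identity can be verified directly using trace-freeness.

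Finally, I will note that $I_3-I_2=\partial_k(Q_{li}\partial_lQ_{ki})-\partial_l(Q_{li}\partial_kQ_{ki})$ is a null Lagrangian, which justifies the name ``divergence terms''. This is not needed for the identity itself.
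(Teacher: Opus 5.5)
Your proposal is correct and follows the paper's proof essentially step for step: you split $|\mathcal{T}(\QQ)|^2=2C_{ij}C_{ij}+2C_{ij}C_{ji}$, reduce the first term with $\epsilon^{ikl}\epsilon^{imn}=\delta_{km}\delta_{ln}-\delta_{kn}\delta_{lm}$, and reduce the second with the $3\times 3$ determinant formula. Carrying out the six-term contraction you deferred, the three terms containing $\partial_kQ_{ll}$ or $\partial_lQ_{ii}$ vanish by tracelessness, and the remaining three give exactly $C_{ij}C_{ji}=I_1-I_3-I_2$, hence $|\mathcal{T}(\QQ)|^2=4I_1-4I_3-2I_2$ as required.
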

\begin{Remark}\label{rmk:bound-integral}
Let $\Omega\subset\mathbb{R}^3$ be a bounded domain and $\nu$ be the out normal of $\partial\Omega$.
One can straightforwardly obtain
  \begin{align*}
 \int_{\Omega} (\partial_if\partial_j g- \partial_jf\partial_i g)\ud x=\int_{\Omega}\Big(\partial_i(f\partial_jg)-\partial_j(f\partial_ig)\Big) \ud x
 =\int_{\partial\Omega}f(\nu_i\partial_jg-\nu_j\partial_i g)\ud\sigma.
  \end{align*}
As $\nu_i\partial_jg-\nu_j\partial_i g$ are components of $\nu\times \nabla g$ which only depends on the boundary value of $g$, the last term in \eqref{eq:prop:div-grad} depends only on the boundary data of $\QQ$. Thus, the identity \eqref{eq:prop:div-grad} can be viewed as an analogous version of the div-curl decomposition for the gradients of vector fields:
\begin{align*}
  |\nabla \nn|^2=|\nabla\cdot\nn|^2+|\nabla\times\nn|^2+(\partial_in_j\partial_jn_i-\partial_in_i\partial_jn_i).
\end{align*}
\end{Remark}
\begin{proof}It is straightforward to obtain that
\begin{align*}
|\mathcal{T}(\QQ)|^2&=|\ve^{ikl}\partial_k\QQ_{lj}+\ve^{jkl}\partial_k\QQ_{li}|^2\\
&=2\ve^{ikl}\partial_k\QQ_{lj}\ve^{imn}\partial_m\QQ_{nj}+2\ve^{ikl}\partial_k\QQ_{lj}\ve^{jmn}\partial_m\QQ_{ni}.
\end{align*}
Since
\begin{align*}
  \ve^{ikl}\partial_k\QQ_{lj}\ve^{imn}\partial_m\QQ_{nj}=|\nabla\QQ|^2-\partial_kQ_{li}\partial_lQ_{ki}
\end{align*}
and
\begin{align*}
  \ve^{ikl}\partial_k\QQ_{lj}\ve^{jmn}\partial_m\QQ_{ni}&\,=\left(\begin{array}{ccc}
\delta^i_j&\delta^i_m&\delta^i_n\\
\delta^k_j&\delta^k_m&\delta^k_n\\
\delta^l_j&\delta^l_m&\delta^l_n\\
		\end{array}\right) \partial_k\QQ_{lj}\partial_m\QQ_{ni}\\
&\,=\partial_kQ_{li}\partial_kQ_{li}+\partial_kQ_{ll}\partial_iQ_{ki}+\partial_kQ_{lk}\partial_lQ_{ii}\\
& \,\quad -\partial_kQ_{ll}\partial_kQ_{ii}-\partial_kQ_{li}\partial_lQ_{ki}-\partial_kQ_{lk}\partial_iQ_{li}\\
&\,=\partial_kQ_{li}\partial_kQ_{li}-\partial_kQ_{li}\partial_lQ_{ki}-\partial_kQ_{lk}\partial_iQ_{li}\\
&\,=|\nabla\QQ|^2-\partial_kQ_{li}\partial_lQ_{ki}-|\nabla\cdot\QQ|^2,
\end{align*}
we have
\begin{align*}
|\mathcal{T}(\QQ)|^2&=2(2|\nabla\QQ|^2-2\partial_kQ_{li}\partial_lQ_{ki}-|\nabla\cdot\QQ|^2)\\
&=2(2|\nabla\QQ|^2-3|\nabla\cdot\QQ|^2)+4(\partial_kQ_{lk}\partial_iQ_{li}-\partial_kQ_{li}\partial_lQ_{ki}).
\end{align*}
which gives the conclusion.
\end{proof}

As the discussion in Remark \ref{rmk:bound-integral}, we have
\begin{align*}
\int_{\partial\Omega}(\partial_jQ_{ik}\partial_kQ_{ij}-\partial_jQ_{ij}\partial_kQ_{ik})
&=\int_{\partial\Omega} Q_{ik}(\nu_j\partial_kQ_{ij}-\nu_k\partial_jQ_{ij})\\&=
\int_{\partial\Omega} Q_{ik}\ve^{ljk}(\nu\times \nabla)_l Q_{ij}.
\end{align*}
We introduce
\begin{align}\label{operator:boundary}
  \mathcal{B}_{\partial\Omega}(\QQ):=\frac{2}{3}\int_{\partial\Omega} Q_{ik}\ve^{ljk}(\nu\times \nabla)_l Q_{ij}\ud\sigma.
\end{align}
Note that $(\nu\times \nabla)_l$ are tangential derivative operators on $\partial\Omega$. Therefore $\mathcal{B}_{\partial\Omega}(\QQ)$
depends only on the boundary value of $\QQ$ on $\partial\Omega$, and apparently,
\begin{align*}
    |\mathcal{B}_{\partial\Omega}(\QQ)|\le C \|\QQ\|^2_{H^{1}(\partial\Omega)}.
\end{align*}
A standard extension argument gives us the improved estimate:
\begin{align*}
  |\mathcal{B}_{\partial\Omega}(\QQ)|\le C \|\QQ\|^2_{H^{\frac{1}{2}}(\partial\Omega)}.
\end{align*}

Define
\begin{align}\label{def:barwedgeQQ}
  \QQ\barwedge \Phi &:=(\epsilon^{kij} Q_{im}\Phi_{jm}+\epsilon^{kij}Q_{im}\Phi_{jm})_{1\le k\le 3}\in \mathbb{R}^3 &&\text{ for }\QQ, \Phi\in\mathcal{Q},\\ \label{def:barwedgeQv}
  \QQ\barwedge \nu&:=\Big((\epsilon^{ikl} Q_{kj}+\epsilon^{jkl}Q_{ki})\nu_l\Big)_{1\le i, j\le 3}\in\mathcal{Q} &&\text{ for }\QQ\in\mathcal{Q}, \nu\in\mathbb{R}^3.
\end{align}
Direct calculations give us that for $\QQ, \Phi\in\mathcal{Q}$ and $\nu\in\mathbb{R}^3$:
\begin{align*}
  \Phi:(\QQ\barwedge\nu)=-\QQ:(\Phi\barwedge\nu) =-\nu\cdot(\QQ\barwedge\Phi).
\end{align*}
Moreover, for $\QQ=s_+(\nn\nn-\frac13\II)\in\mathcal{N}$ and $\nu\in\mathbb{R}^3$, one has
\begin{align*}
   \QQ\barwedge \nu&=s_+\Big((\nn\wedge \nu)\nn+\nn(\nn\wedge \nu)\Big),
\end{align*}
and in this case, $ \QQ\barwedge \nu=0$ if and only if $\QQ=s_+(\nu\nu-\frac13\II)$. Here $\nn\wedge \nu$ is the standard wedge for two vector in $\mathbb{R}^3$.

Direct calculation gives us the Stokes-type formula for the operator $\mathcal{T}$:
\begin{align}\label{eq:wedge-integral}
\int_{U}\mathcal{T}(\QQ):\Phi \ud x&=\int_{U} (\epsilon^{ikl}\partial_kQ_{lj}+\epsilon^{jkl}\partial_kQ_{li}):\Phi_{ij}\ud x\nonumber\\
&=-\int_{U} (\epsilon^{ikl}Q_{lj}+\epsilon^{jkl}Q_{li}):\partial_k\Phi_{ij}\ud x+\int_{\partial U}\nu_k(\epsilon^{ikl} Q_{lj}+\epsilon^{jkl}Q_{li})\Phi_{ij}\ud \mathcal{H}^2 \nonumber\\
&= \int_{U}\QQ:\mathcal{T}(\Phi) \ud x-\int_{\partial U}\Phi:(\QQ\barwedge\nu)\ud \mathcal{H}^2.
\end{align}

\bigskip

\section{Asymptotics of the leading order}
Define
\begin{align*}
  \mathcal{F}_\ve(\QQ)=&\int_{\Omega}\Big[\frac{1}2(L_1|\nabla \QQ_\ve|^2+{L}_2|\nabla\cdot\QQ_\ve|^2)+\frac{1}{\ve^2}F_b(\QQ_\ve)\Big]\ud x,\\
 \widetilde{ \mathcal{F}}_\ve(\QQ)=&\int_\Omega  \Big[\frac12\Big(L_1+\frac{2{L}_2}{3}\Big)|\nabla \QQ_\ve|^2-\frac{{L}_2}{12}|\mathcal{T}(\QQ_\ve)|^2+\frac{1}{\ve^2}F_b(\QQ_\ve)\Big]\ud x.
\end{align*}
Then by the decomposition formula \eqref{eq:prop:div-grad}, one has
\begin{align*}
\mathcal{F}_\ve(\QQ) =  \widetilde{ \mathcal{F}}_\ve(\QQ)  +L_2 \mathcal{B}_{\partial\Omega}(\BB_{\ve}).
\end{align*}
Thus, for $L_2\in(-\frac{3}{2}L_1,0)$, one may apply Lin-Pan-Wang's theory \cite{LPW} to obtain the leading order asymptotics.
The main difference is that the assumption in \cite{LPW} that the bulk energy $F(u)$
the proportional to the square of distance between

Define
\begin{align*}
  F_\ve^*=\inf_{\QQ\in H_{\BB_\ve}^1}\mathcal{F}_\ve(\QQ),\quad  \widetilde{F}_\ve^*=\inf_{\QQ\in H_{\BB_\ve}^1}\widetilde{\mathcal{F}}_\ve(\QQ),
\end{align*}
Then we have
\begin{align*}
  F_\ve^*=  \widetilde{F}_\ve^*+L_2\mathcal{B}_{\partial\Omega}(\BB_{\ve}).
\end{align*}
For ${L_2}>-\frac{3}{2}L_1$, let:
\begin{align*}
  \beta_L=\sqrt{L_1+\frac{2{L}_2}{3}}.
\end{align*}

\subsection{Lower bound estimate}

Assume that $\{\QQ_\ve\}\subset H_{\BB_\ve}^1(\Omega)$ and there holds the energy bound
\begin{align*}
  \ve \mathcal{F}_\ve(\QQ_\ve )\le \Lambda
\end{align*}
for some uniform positive constant $\Lambda$. The proof follows the steps in \cite[Proof of Theorem 1.1]{LPW}.

\begin{Proposition} It holds that
\begin{align}\label{ineq:energy-lead}
\ve\int_{\Omega}\Big[\frac12(L_1|\nabla \QQ_\ve|^2+L_2|\nabla\cdot\QQ_\ve|^2)+\frac{1}{\ve^2}F_b(\QQ_\ve)\Big]\ud x\ge {\beta_L}c_*\mathcal{H}^{n-1}(\Gamma)+o_{\ve}(1).
\end{align}
\end{Proposition}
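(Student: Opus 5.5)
We use the decomposition of Lemma \ref{lem:decomp} to reduce the statement to a Modica--Mortola type lower bound for the isotropic part, and then follow the Lin--Pan--Wang argument \cite{LPW}. Since $\mathcal{F}_\ve(\QQ_\ve)=\widetilde{\mathcal{F}}_\ve(\QQ_\ve)+L_2\mathcal{B}_{\partial\Omega}(\BB_\ve)$ and $L_2<0$, the term $-\frac{L_2}{12}|\mathcal{T}(\QQ_\ve)|^2$ is nonnegative and can be dropped. We are left with
\begin{align*}
\ve\mathcal{F}_\ve(\QQ_\ve)\ge \int_\Omega\Big[\frac{\ve\beta_L^2}{2}|\nabla\QQ_\ve|^2+\frac1\ve F_b(\QQ_\ve)\Big]\ud x+\ve L_2\mathcal{B}_{\partial\Omega}(\BB_\ve).
\end{align*}
The boundary term is handled first. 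Its integrand is $Q_{ik}$ times tangential derivatives of $Q_{ij}$, so by (A1)--(A2) we have
\begin{align*}
|\mathcal{B}_{\partial\Omega}(\BB_\ve)|\le C\int_\Sigma |s_\ve||\nabla_\Sigma (s_\ve\EE_\ve)|\,\ud\mathcal{H}^2,
\end{align*}
and $\ve$ times this is $o_\ve(1)$. The factor $\ve|\nabla_\Sigma s_\ve|^2$ is bounded by $M_0$ but concentrates in an $O(\ve)$ layer, so Cauchy--Schwarz gives $O(\sqrt{\ve})$ there. The $\nabla\EE_\ve$ part is controlled through the trace of $\hat\EE_\ve$; if needed we use the $H^{1/2}$ bound $|\mathcal{B}|\le C\|\BB_\ve\|_{H^{1/2}}^2$ together with the extension $\hat\EE_\ve$.

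Next comes the Modica--Mortola step. We introduce the calibration function $\phi(\QQ)=d_0^F(\QQ)$, which is Lipschitz on bounded sets and satisfies $|\nabla_\QQ\phi|\le\sqrt{2F_b}$ a.e. After rescaling, Young's inequality gives
\begin{align*}
\frac{\ve\beta_L^2}{2}|\nabla\QQ_\ve|^2+\frac1\ve F_b(\QQ_\ve)\ge \beta_L\sqrt{2F_b(\QQ_\ve)}\,|\nabla\QQ_\ve|\ge \beta_L|\nabla(\phi\circ\QQ_\ve)|.
\end{align*}
The function $\phi\circ\QQ_\ve$ is then uniformly bounded in $BV(\Omega)$, after first truncating $|\QQ_\ve|$; truncation does not increase the energy for large $|\QQ|$, and the growth bound $d_0^F\sim|\QQ|^2+|\QQ|^3$ from the first lemma of Section 2 gives $L^1$ control. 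Lemma \ref{lem:bulk-ineq}(3) gives $\int_\Omega d^2(\QQ_\ve,\mathcal{N}\cup\{0\})\le C\ve$. Up to a subsequence, therefore, $\phi\circ\QQ_\ve\to c_*\chi_{\Omega^+}$ in $L^1$, where $\Omega^+$ is a finite-perimeter set and $\QQ_\ve\to0$ a.e. off $\Omega^+$.

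To handle boundary values we extend the problem to a slightly larger domain $\tilde\Omega\supset\Omega$, putting the boundary data into a thin collar. Assumption (A1) controls the collar energy by $\int_\Sigma(\ve\beta_L^2|\nabla_\Sigma s_\ve|^2+\ve^{-1}\mathfrak f(s_\ve))$ times the collar width, which tends to zero. In the limit the traces become $0$ on $\Sigma^-$ and $\mathcal N$-valued on $\Sigma^+$, so $\partial\Omega^+\cap\Omega$ spans $\Sigma^0$. Lower semicontinuity of total variation then gives
\begin{align*}
\liminf_{\ve\to0}\ve\widetilde{\mathcal{F}}_\ve\ge\beta_L c_*\,\mathrm{Per}(\Omega^+;\tilde\Omega)\ge\beta_Lc_*\mathcal{H}^2(\Gamma),
\end{align*}
where the last inequality uses that $\Gamma$ is area minimizing with boundary $\Sigma^0$.

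We expect the main obstacle to be this boundary bookkeeping: showing that the trace of the $BV$ limit really equals $c_*\chi_{\Sigma^+}$, so that no perimeter is lost on $\partial\Omega$. This is where (A1) enters, through the collar construction in which $s_\ve$ is interpolated to the constants $0$ and $s_+$ with $O(1)$ energy per unit area. We also need the boundary operator estimate to be $o(1/\ve)$. Equivalently, the lower bound holds if the liminf argument is run with the boundary contribution included in the $BV$ total variation over $\overline\Omega$.
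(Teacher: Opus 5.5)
Your proposal is correct in outline, and it takes a genuinely different route from the paper at the boundary. The core is shared: the identity $\mathcal{F}_\ve=\widetilde{\mathcal{F}}_\ve+L_2\mathcal{B}_{\partial\Omega}(\BB_\ve)$, dropping $-\frac{L_2}{12}|\mathcal{T}(\QQ_\ve)|^2\ge 0$, and the Modica--Mortola bound for $\phi_\ve=d_0^F(\QQ_\ve)$ with BV compactness.

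\textbf{Identifying the limit.} The paper picks $\delta_\ve\in[\delta/2,\delta]$ by coarea so that $\Omega^\pm_{\ve,\delta_\ve}$ have bounded perimeter. These sets converge to a partition $E^\pm$, and then $\phi=c_*\chi_{E^+}$. You can skip this step. Along a subsequence, $\phi_\ve$ converges a.e.\ and $d(\QQ_\ve,\mathcal{N}\cup\{0\})\to0$ a.e. Since $d_0^F$ is continuous, equals $0$ at $0$ and equals $c_*$ on $\mathcal{N}$, this already forces $\phi\in\{0,c_*\}$ a.e.

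\textbf{Handling the Dirichlet data.} The paper stays inside $\Omega$. It asserts, via convergence of the level curves on $\partial\Omega$, that $\partial(\partial\llbracket E^+\rrbracket\lfloor\Omega)=\Sigma^0$, and then applies coarea to $|D\phi|(\Omega)$. You put the data in a collar instead. Lower semicontinuity on $\tilde\Omega$ then charges both $\partial^*\Omega^+\cap\Omega$ and the part of $\Sigma$ where the inner trace of $\chi_{\Omega^+}$ differs from $\chi_{\Sigma^+}$. Together these form an integral current in $\overline\Omega$ bounded by $\Sigma^0$, so the minimality of $\Gamma$ applies.

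The paper's route spares you building an extension. Yours is more robust because it sees boundary layers. Suppose $\QQ_\ve\approx0$ in $\Omega$ except in an $O(\ve)$-layer along $\Sigma^+$. Then $\ve\mathcal{F}_\ve$ stays bounded, yet $E^+=\emptyset$ and $|D\phi|(\Omega)=0$. The limit of $\partial\llbracket\Omega^+_{\ve,\delta_\ve}\rrbracket\lfloor\Omega$ then lies on $\partial\Omega$, and an interior-only argument must recover that mass separately.

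\textbf{Repairs.} Three points need fixing in the write-up.
\begin{enumerate}
\item Drop the claim that the limiting traces are $0$ on $\Sigma^-$ and $\mathcal{N}$-valued on $\Sigma^+$, so that $\partial\Omega^+\cap\Omega$ spans $\Sigma^0$. The example above shows this fails in general. So the ``main obstacle'' you name is not something to prove, and your $\tilde\Omega$ formulation (your last sentence) does not need it.
\item Assumption (A1) alone does not control the collar. A normal-constant extension of $s_\ve\EE_\ve$ also costs $\ve\int_\Sigma s_\ve^2|\nabla_\Sigma\EE_\ve|^2$. Assumption (A2) does not bound this, because it only gives an interior extension $\hat\EE_\ve$ whose trace is merely $H^{1/2}$. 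Instead, reflect $\hat\EE_\ve$ across $\Sigma$ into the collar and truncate $s_\ve$ there at a fixed large level. The resulting jump of $d_0^F$ across $\Sigma$ costs at most $C\int_\Sigma\mathfrak{f}(s_\ve)\,\ud\mathcal{H}^2\le CM_0\ve$. The collar energy is then $O(h)+o_\ve(1)$, and you let $h\to0$ after $\ve\to0$.
\item In $\mathcal{B}_{\partial\Omega}(s_\ve\EE_\ve)$, the terms carrying $\nabla_\Sigma s_\ve$ vanish identically, since $\epsilon^{ljk}(\EE_\ve^2)_{jk}=0$. So no concentration argument is needed there. Only the $s_\ve^2\nabla_\Sigma\EE_\ve$ term has to go through $\hat\EE_\ve$.
\end{enumerate}
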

\begin{proof}
We assume that
\begin{align*}
 \Lambda_1: =\liminf_{\ve\to 0} \ve \mathcal{F}_\ve(\QQ_\ve) <+\infty.
\end{align*}

For any $\delta\in(0,\frac{1}2d(\mathcal{N},0))$, define
\begin{align*}
  \Omega_{\ve,\delta}^+=\{x\in\Omega: d(\QQ_\ve(x), \mathcal{N})<\delta\}, \quad
  \Omega_{\ve,\delta}^-=\{x\in\Omega: d(\QQ_\ve(x), 0)<\delta\},
\end{align*}
and
\begin{align*}
  E_{\ve,\delta}=\Omega\setminus(  \Omega_{\ve,\delta}^+\cup  \Omega_{\ve,\delta}^-).
\end{align*}
Lemma \ref{lem:bulk-ineq} gives us that there exists $C>0$ such that
\begin{align*}
  F_b(\QQ)\ge C\delta^2,\quad \text{ for }d(\QQ_\ve, \mathcal{N}\cup\{0\})\ge \delta.
\end{align*}
Thus, we have
\begin{align}\label{estimate:inner-domain}
  |E_{\ve,\delta}|\le \frac{1}{C\delta^2}\int_{\Omega}F_b(\QQ_\ve) \ud x\le \frac{\Lambda \ve}{C{\delta}^2}\to 0,\qquad \text{ as }\ve\to 0.
\end{align}

Moreover,
\begin{align*}
 \Lambda &~\ge \ve \mathcal{F}_\ve(\QQ_\ve )\\
 &~ =\ve \widetilde{ \mathcal{F}}_\ve(\QQ_\ve)  +\ve  L_2\mathcal{B}_{\partial\Omega}(\BB_{\ve})\\
&~\ge  \int_\Omega\Big\{ \frac{\ve\beta_L^2}{2}|\nabla\QQ_\ve|^2+\frac{1}{\ve}F_b(\QQ_\ve)\Big\}\ud x -C\ve \\
 &~ \ge \beta_L\int_\Omega \sqrt{2F_b(\QQ_\ve)}|\nabla\QQ_\ve|\ud x -C\ve .
\end{align*}
Therefore,
\begin{align*}
 \Lambda +1~&\ge \beta_L\int_\Omega \sqrt{2F_b(\QQ_\ve)}|\nabla\QQ_\ve|\ud x \\
& \ge  \beta_L\Big(  \int_{\{x\in\Omega: d(\QQ_\ve, \mathcal{N})\le \delta \}} + \int_{\{x\in\Omega: d(\QQ_\ve, 0)\le \delta \} } \Big)\sqrt{2F_b(\QQ_\ve)}|\nabla\QQ_\ve|\ud x\\
& \ge C   \int_{\{x\in\Omega: d(\QQ_\ve, \mathcal{N})\le \delta \}}d(\QQ_\ve, \mathcal{N}) |\nabla\QQ_\ve|\ud x+ \int_{\{x\in\Omega: d(\QQ_\ve, 0)\le \delta \}} d(\QQ_\ve, 0) |\nabla\QQ_\ve|\ud x\\
& \ge C   \int_{\{x\in\Omega: d(\QQ_\ve, \mathcal{N})\in[\delta/2,\delta] \}}d(\QQ_\ve, \mathcal{N}) |\nabla\QQ_\ve|\ud x+ \int_{\{x\in\Omega: d(\QQ_\ve, 0)\in[\delta/2,\delta] \}} d(\QQ_\ve, 0) |\nabla\QQ_\ve|\ud x\\
&\ge C\delta \Big(\int_{\{x\in\Omega: d(\QQ_\ve, \mathcal{N})\in[\delta/2,\delta] \}}|\nabla d(\QQ_\ve, \mathcal{N})|\ud x
+\int_{\{x\in\Omega: d(\QQ_\ve, 0)\in[\delta/2,\delta] \}} |\nabla d(\QQ_\ve, 0)|\ud x\Big)\\
&\ge C\delta \int_{\delta/2}^\delta \mathcal{H}^2\Big(\{x\in\Omega: d(\QQ_\ve, \mathcal{N})=t\}\cup\{x\in\Omega: d(\QQ_\ve, 0)=t\}\Big)\ud t.
\end{align*}
Thus, there exists $\delta_{\ve}\in[\delta/2,\delta]$ such that
\begin{align*}
  \mathcal{H}^2\Big(\{x\in\Omega: d(\QQ_\ve, \mathcal{N})=\delta_{\ve}\}\cup\{x\in\Omega: d(\QQ_\ve, 0)=\delta_{\ve}\}\Big)\le C,
\end{align*}
which implies
\begin{align}\label{estimate:boundary}
  \mathcal{H}^2(\partial \Omega_{\ve,\delta_{\ve}}^+\cap \Omega )+  \mathcal{H}^2(\partial \Omega_{\ve,\delta_{\ve}}^-\cap \Omega )\le C.
\end{align}

From \eqref{estimate:inner-domain}-\eqref{estimate:boundary}, there are two sets
$E^\pm\subset\Omega$ such that, up to choice of subsequences,
\begin{align*}
\chi_{\Omega_{\ve,\delta_{\ve}}^\pm}\rightharpoonup \chi_{E^\pm}\quad \text{in } BV(\mathbb{R}^3),\\
\chi_{\Omega_{\ve,\delta_{\ve}}^\pm}\rightarrow \chi_{E^\pm}\quad \text{in } L^1(\mathbb{R}^3),
\end{align*}
as $\ve\to 0$. Moreover, one get from (\ref{estimate:inner-domain}) that
\begin{align*}
  |E^+\cap E^-|=0, \quad |\Omega\setminus(E^+\cup E^-)|=0.
\end{align*}

Define
\begin{align*}
  \phi_\ve(x)=d_0^F(\QQ_\ve(x)).
\end{align*}
As $|D_\QQ d_0^F|=\sqrt{2F_b(\QQ)}$, we have
\begin{align*}
  |\nabla\phi_\ve |\le \sqrt{2F_b(\QQ_\ve(x))}|\nabla\QQ_\ve|,
\end{align*}
which implies that
\begin{align*}
 \beta_{L} \int_{\Omega}|\nabla\phi_\ve|\ud x\le \int_{\Omega}\Big(\frac{\beta_{L}^2\ve}{2}|\nabla\QQ_\ve|^2+\frac{F_b(\QQ_\ve)}{\ve}\Big)\ud x\le \Lambda.
\end{align*}
Since $\phi_\ve$ are uniformly bounded on $\partial\Omega$, we get from the Poincare inequality that
\begin{align*}
  \int_\Omega|\phi_\ve|\ud x\le C(1+\int_\Omega|\nabla\phi_\ve|\ud x)\le C(\Lambda+1).
\end{align*}
Thus $\{\phi_\ve\}$ are uniformly bounded in $BV(\Omega)$, and there is a nonnegative function $\phi$ in $BV(\Omega)$ such that,
 up to choice of subsequences, $\phi_\ve\to \phi$ weakly in $BV(\Omega)$ and strongly in $L^1(\Omega)$.
We get by the lower semicontinuity that
\begin{align}\label{DPhi-bound}
\beta_{L}|D\phi|(\Omega)\le \beta_{L}\liminf_{\ve\to 0} \int_{\Omega}|\nabla\phi_\ve|\ud x\le \liminf_{\ve\to 0}\ve\mathcal{F}(\QQ_\ve)=\Lambda_1.
\end{align}

Using Fatou's lemma, we can get
\begin{align*}
  \int_{{\Omega}} \phi^2\chi_{E_-} \ud x
 & \le \liminf_{\ve\to 0}   \int_{{\Omega}} \phi_\ve^2\chi_{\Omega^-_{\ve,\delta_{\ve}}} \ud x
  \le C \liminf_{\ve\to 0}   \int_{{\Omega}} |\QQ_\ve|^2\chi_{\Omega^-_{\ve,\delta_{\ve}}} \ud x\\
&  \le C \liminf_{\ve\to 0}   \int_{{\Omega}} F_b(\QQ_\ve)\chi_{\Omega^-_{\ve,\delta_{\ve}}} \ud x =0,
\end{align*}
and
\begin{align*}
  \int_{{\Omega}} |\phi-c_*|^2\chi_{E_+} \ud x\le &~ \liminf_{\ve\to 0}   \int_{{\Omega}} (\phi_\ve-c_*)^2\chi_{\Omega^+_{\ve,\delta_{\ve}}} \ud x\\
  =&~\liminf_{\ve\to 0}   \int_{\Omega^+_{\ve,\delta_{\ve}}}(d_0^F(\QQ_\ve)-d_0^F(\Pi\QQ_\ve))^2 \ud x\\
  \le &~C \liminf_{\ve\to 0} \int_{\Omega^+_{\ve,\delta_{\ve}}}|\QQ_\ve-\Pi\QQ_\ve|^2\ud x\\
  \le &~C\liminf_{\ve\to 0} \int_{\Omega^+_{\ve,\delta_{\ve}}}F_b(\QQ_\ve)\ud x =0.
 \end{align*}
Here $\Pi\QQ_\ve$ denotes the  nearest-point projection of $\QQ_\ve$ to the uniaxial manifold $\mathcal{N}$.
 Thus $\phi=0$ for a.e. $x\in E_-$ and $\phi=c_*$ for a.e. $x\in E_+$.

As a consequence, one can apply the arguments in \cite[Proposition 3.1]{LPW} to obtain \eqref{ineq:energy-lead}. More precisely,
from \eqref{estimate:boundary}, one has
 \begin{align}\label{Eplus}
   E^+=\{x\in\Omega: \quad \phi\ge t\},\quad \forall t\in[0,c_*).
 \end{align}
For a subset $E\subset\mathbb{R}^3$, we denote by $\llbracket E\rrbracket$ the corresponding 3-dimensional current, and denote by $\partial \llbracket E\rrbracket$ the boundary current of $\llbracket E\rrbracket$.
Then we have
\begin{align*}
  \partial \llbracket \partial\llbracket\Omega^+_{\ve,\delta_{\ve}}\rrbracket\lfloor \Omega \rrbracket=\{x\in\partial\Omega, d(\QQ_\ve, \mathcal{N})=\delta_{\ve}\}\rightharpoonup \llbracket\Sigma^0\rrbracket \text{ as }\ve\to 0,
\end{align*}
as weak convergence of currents. Thus, we have
\begin{align}\label{E-boundary}
   \partial \llbracket \partial\llbracket E^+\rrbracket\lfloor \Omega \rrbracket =\Sigma^0.
\end{align}

Combining \eqref{Eplus}-\eqref{E-boundary} and the minimality of $\Gamma$ that
\begin{align*}
  \mathcal{H}^2(\partial^*\{x\in\Omega:\phi(x)\ge t\}\lfloor \Omega ) \ge  \mathcal{H}^2(\Gamma), \qquad \forall t\in[0,c_*),
\end{align*}
where $\partial^*E$ denotes the reduce boundary of $E$.
Thus, we have by \eqref{DPhi-bound} and the co-area formula for BV functions that
\begin{align*}
  \Lambda_1 \ge \beta_L |D\phi|(\Omega)=\beta_L \int_{\mathbb{R}_+}\mathcal{H}^2\big(\partial^*\{x\in\Omega:\phi>t\}\lfloor \Omega\big) dt\\
 \ge \beta_L \int_0^{c_*} \mathcal{H}^2\big(\partial^*\{x\in\Omega:\phi>t\}\lfloor \Omega\big) dt\ge \beta_L  c_* \mathcal{H}^2(\Gamma),
\end{align*}
which gives \eqref{ineq:energy-lead}.
\end{proof}
\subsection{Upper bound estimates}\label{subsec:upper-bound}
For the upper bound estimates, it suffices to construct a comparison map $\QQ_\ve$ which satisfies the boundary condition and closed to the upper bound. For this end, we can take
\begin{align*}
  \QQ_\ve=s_\ve(x)(\nn_\ve(x)\otimes\nn_\ve(x)-\frac13\II).
\end{align*}
Thus,
\begin{align*}
  \mathcal{F}_\ve(\QQ_\ve)=&\int_{\Omega}\Big\{\frac{L_1}2(|\nabla \QQ_\ve|^2+{L}_2|\nabla\cdot\QQ_\ve|^2)+\frac{1}{\ve^2}F_b(\QQ_\ve)\Big\}\ud x\\
  =&\int_{\Omega}\Big\{L_1(\frac13|\nabla s_\ve|^2+s_\ve^2|\nabla\nn_\ve|^2)+\frac{{L}_2}{2}\Big(\frac{1}{9}|\nabla s_\ve|^2+\frac{1}{3}(\nn_\ve\cdot\nabla s_\ve)^2+s_\ve^2(\nabla\cdot\nn_\ve)^2\\
  &\quad  +\frac{4}{3}s_\ve(\nn_\ve\cdot\nabla s_\ve) (\nabla\cdot\nn_\ve)-\frac{2}{3}s_\ve\nabla s_\ve\cdot(\nn_\ve\cdot\nabla )\nn_\ve\Big)
 +\frac{1}{\ve^2}F_b(s_\ve)\Big\}\ud x\\
  =&\int_{\Omega}\Big\{\frac13\Big(L_1+\frac{2}{3}L_2\Big)|\nabla s_\ve|^2+L_1s_\ve^2|\nabla\nn_\ve|^2-\frac{{L}_2}{6}|\nn_\ve\times\nabla s_\ve|^2 \\
  &\quad +\frac{L_2}{2}  \Big(s_\ve^2(\nabla\cdot\nn_\ve)^2+\frac{2}{3}s_\ve\nabla s_\ve\cdot(\nn_\ve\cdot\nabla )\nn_\ve \Big)\\
  &\quad +\frac{1}{\ve^2}F_b(s_\ve)+\frac{2L_2}{3}\nabla\cdot\Big(s_\ve^2\big[(\nabla\cdot \nn_\ve)\nn_\ve-(\nn_\ve\cdot\nabla)\nn_\ve\big]\Big)\Big\}\ud x\\
  \triangleq &W_\ve(s_\ve, \nn_\ve).
\end{align*}
Then from the fact that $-\frac{3}{2}L_1<L_2<0$, we know that $W_\ve(s_\ve, \nn_\ve)$ meet the requirements of case $(**)$  in \cite[Proposition 1.1]{LW23} on the coefficients. Therefore, the upper bound estimate directly follows from the upper bound estimate of case ({\bf B}) in \cite[Section 2.2]{LW23}.

\bigskip

\section{Asymptotics for the next order}
This section is devoted to the proof of Theorem \ref{thm:main2}.

Let $d_*=\frac12d(\mathcal{N},0)$.  For $\delta\in(0,d_*)$, define
  \begin{align*}
  \Omega^+_{\ve,\delta}=\{x\in\Omega:\quad d_\mathcal{N}(\QQ_\ve(x))\le \delta\},\quad E^+_{\ve,\delta}=\{x\in\Omega:\quad d_\mathcal{N}(\QQ_\ve(x))> \delta, |\QQ_\ve|> \delta\}.
  \end{align*}
Then for $x\in\Omega_{\ve,\delta}^+$, we have $\QQ_\ve\in N(\mathcal{N}, \delta)$(the $\delta$-neighbourhood of $\mathcal{N}$) which can be written as
\begin{align*}
  \QQ_\ve(x)= \bar{\QQ}_\ve(x)+\rho_\ve(x) \vv_\ve(x),\quad\text{ with }\bar{\QQ}_\ve(x)\in\mathcal{N},~ \rho_\ve=d_\mathcal{N}(\QQ_\ve),~ \vv_\ve=\frac{\QQ_\ve-\bar{\QQ}_\ve}{|\QQ_\ve-\bar{\QQ}_\ve|}.
\end{align*}
\subsection{Refined lower bound estimate}
\begin{proposition}
There exists $C>0$ independent of $\ve$ such that
\begin{align}\nonumber
   &\int_{\Omega}\Big(\frac{\beta_L^2}2|\nabla\QQ_\ve|^2+\frac{1}{\ve^2}F_b(\QQ_\ve)\Big)\ud x\\ \label{prop:decomp}
  &\ge \frac{\beta_L^2}2(1-C \delta)\int_{\Omega^+_{\ve,\delta}} |\nabla \bar{\QQ}_\ve(x)|^2\ud x+ \frac{\beta_Lc_*\mathcal{H}(\Gamma)}{\ve}-C.
\end{align}
\end{proposition}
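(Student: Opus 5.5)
The proof will follow the refined lower bound strategy of Lin–Pan–Wang \cite{LPW}. The energy is split according to the level sets of $\rho_\ve=d_\mathcal{N}(\QQ_\ve)$. On $\Omega^+_{\ve,\delta}$ we use the decomposition $\QQ_\ve=\bar\QQ_\ve+\rho_\ve\vv_\ve$, where $\bar\QQ_\ve=\Pi\QQ_\ve$ is the nearest-point projection. This projection is smooth on the tubular neighbourhood $N(\mathcal{N},\delta)$ for $\delta<d_*$ small. Since $\vv_\ve$ is a unit normal to $\mathcal{N}$ at $\bar\QQ_\ve$, differentiating gives
\begin{align*}
\nabla\QQ_\ve=\nabla\bar\QQ_\ve+\nabla\rho_\ve\,\vv_\ve+\rho_\ve\nabla\vv_\ve .
\end{align*}
Here $\nabla\bar\QQ_\ve$ is tangent to $\mathcal{N}$ and $\vv_\ve$ is normal. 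Moreover, $\nabla\vv_\ve$ has a tangential part bounded by $C|\nabla\bar\QQ_\ve|$, coming from the second fundamental form of $\mathcal{N}$, plus a normal part. Hence
\begin{align*}
|\nabla\QQ_\ve|^2\ge (1-C\rho_\ve)|\nabla\bar\QQ_\ve|^2+|\nabla\rho_\ve|^2\ge (1-C\delta)|\nabla\bar\QQ_\ve|^2+|\nabla\rho_\ve|^2
\end{align*}
on $\Omega^+_{\ve,\delta}$. Equivalently, one can use the fact that $D\Pi$ is a near-isometry on tangent directions with $|D\Pi(\QQ)|\le 1+C\,d_\mathcal{N}(\QQ)$, giving $|\nabla\bar\QQ_\ve|^2\le(1+C\delta)|\nabla\QQ_\ve|^2_{\rm tan}$.

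The leading-order interfacial energy is then extracted from the remaining pieces. Write the total as
\begin{align*}
\int_{\Omega^+_{\ve,\delta}}\frac{\beta_L^2}{2}|\nabla\bar\QQ_\ve|^2(1-C\delta)
+\int_{\Omega^+_{\ve,\delta}}\Big(\frac{\beta_L^2}{2}|\nabla\rho_\ve|^2+\frac{F_b(\QQ_\ve)}{\ve^2}\Big)
+\int_{\Omega\setminus\Omega^+_{\ve,\delta}}\Big(\frac{\beta_L^2}{2}|\nabla\QQ_\ve|^2+\frac{F_b(\QQ_\ve)}{\ve^2}\Big).
\end{align*}
On $\Omega^+_{\ve,\delta}$, Lemma \ref{lem:bulk-ineq}(2) gives $F_b(\QQ_\ve)\ge f_*(\rho_\ve)$, so the middle term is bounded below by a one-dimensional Modica–Mortola energy in $\rho_\ve$. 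On the complement we use the full gradient. Applying $ab$-type estimates with the quasi-distance $\phi_\ve=d_0^F(\QQ_\ve)$, as in the proof of the leading-order Proposition, the last two terms together are at least
\begin{align*}
\frac{\beta_L}{\ve}\int_\Omega \big|\nabla \tilde\phi_\ve\big|\ud x ,
\end{align*}
where $\tilde\phi_\ve$ is the truncated quasi-distance: it equals $\phi_\ve$ outside $\Omega^+_{\ve,\delta}$ and equals $c_*-\int_0^{\rho_\ve}\sqrt{2f_*}$ inside. This uses $|D_\QQ d_0^F|=\sqrt{2F_b}$ and the one-dimensional relation $d_0^F(\QQ)\ge c_*-\int_0^{\rho}\sqrt{2f_*}$ near $\mathcal{N}$. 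The coarea formula then gives
\begin{align*}
\int|\nabla\tilde\phi_\ve|=\int_0^{c_*}\mathcal{H}^2(\{\tilde\phi_\ve=t\})\,\ud t .
\end{align*}

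The key step, and the main obstacle, is to show that each level set has area at least $\mathcal{H}^2(\Gamma)-C\ve$, so that the loss is $O(1)$ after dividing by $\ve$. A mere $o(1)$ area defect would not suffice. The plan is as follows.
\begin{itemize}
\item[(a)] For a.e. $t$, the boundary of the superlevel set $\{\tilde\phi_\ve>t\}$ inside $\Omega$ is a surface whose boundary trace on $\partial\Omega$ is the level curve $\{\tilde\phi_\ve=t\}\cap\partial\Omega$.
\item[(b)] By (A$1^*$), where $s_\ve=S_{\ve^\gamma}(d_\Gamma/\ve)$ near $\Sigma^0$, these level curves lie within distance $C\ve$ of $\Sigma^0$ for $t$ away from the endpoints. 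The tail contributions near $t=0$ and $t=c_*$ are controlled by the exponential bounds on $S_R$.
\item[(c)] Strict stability and uniqueness of $\Gamma$ give a Lipschitz-type estimate for the area of minimizing surfaces with respect to boundary perturbation: any surface spanning a curve within $C\ve$ of $\Sigma^0$ has area at least $\mathcal{H}^2(\Gamma)-C\ve$. Concretely, attach the thin boundary strip of area $O(\ve)$ to obtain a competitor spanning $\Sigma^0$ and compare with $\Gamma$.
\end{itemize}
Integrating over $t\in(0,c_*)$ and dividing by $\ve$ then yields the term $\beta_Lc_*\mathcal{H}^2(\Gamma)/\ve-C$. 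Combining this with the first step gives \eqref{prop:decomp}.
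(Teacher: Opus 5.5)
Your splitting on $\Omega^+_{\ve,\delta}$ matches the paper: you use $F_b\ge f_*(\rho_\ve)$ and a Modica--Mortola bound in $\rho_\ve$. So does your final step, which closes level surfaces with a thin strip of $\partial\Omega$ and compares them with the area-minimizing $\Gamma$. That step uses only minimality; strict stability plays no role.

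You diverge on the complement of $\Omega^+_{\ve,\delta}$, and there step (a) has a gap. It treats $\tilde\phi_\ve$ as continuous, i.e.\ as if $\partial\{\tilde\phi_\ve>t\}\cap\Omega=\{\tilde\phi_\ve=t\}$, and it is not. Put $\Psi(r)=\int_0^r\sqrt{2f_*(s)}\,\ud s$. Where $\rho_\ve=\delta$, the inner value of $\tilde\phi_\ve$ is $c_*-\Psi(\delta)$. The outer value is $d_0^F(\QQ_\ve)\ge\Psi(|\QQ_\ve|)$ by Lemma~\ref{lem:bulk-ineq}(1). Moreover $|\bar{\QQ}_\ve+\delta\vv_\ve|>\sqrt{2/3}\,s_+-\delta$ unless $\vv_\ve$ is the inward radial direction. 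Hence $\tilde\phi_\ve$ jumps across $\partial\Omega^+_{\ve,\delta}$ at every biaxial or outward exit.

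Your energy bound controls only the absolutely continuous part of $D\tilde\phi_\ve$. But for $t\in(c_*-\Psi(\delta),c_*)$, the reduced boundary of $\{\tilde\phi_\ve>t\}$ contains the pieces of $\partial\Omega^+_{\ve,\delta}$ where the jump straddles $t$. Applying minimality of $\Gamma$ to that set bounds a perimeter the energy does not pay for. It gives no bound on $\mathcal{H}^2(\{\tilde\phi_\ve=t\})$, which is what your coarea step needs.

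The repair uses exactly the relation you assert without proof. It follows from Lemma~\ref{lem:bulk-ineq}(1) and the symmetry of $f_*$ about $d_*$: $d_0^F(\QQ)\ge\Psi(|\QQ|)\ge\Psi(\sqrt{2/3}\,s_+-\rho)=c_*-\Psi(\rho)$. So the jump has a sign, and you should choose the separating set according to $t$:
\begin{itemize}
\item For $t\in(c_*-\Psi(\delta),c_*)$, take $\{\rho_\ve<\Psi^{-1}(c_*-t)\}$. Its boundary lies in $\Omega^+_{\ve,\delta}$ and is contained in $\{\tilde\phi_\ve=t\}$.
\item For $t<c_*-\Psi(\delta)$, note that $d_0^F(\QQ_\ve)>t$ on $\overline{\Omega^+_{\ve,\delta}}$. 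So $\partial\{d_0^F(\QQ_\ve)>t\}$ lies entirely in the complement, where you used $d_0^F$.
\end{itemize}
Both are level sets of Sobolev functions without jumps, and the two coarea integrals then give the claimed bound.

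With this fix your route is a valid alternative that reuses the quasi-distance of the leading-order proof. The paper avoids the compatibility question altogether. It uses the Lipschitz functions $d_\mathcal{N}(\QQ_\ve)$ on $\{d_\mathcal{N}(\QQ_\ve)<d_*\}$ and $|\QQ_\ve|$ on the disjoint set $\{|\QQ_\ve|<d_*\}$, via Lemma~\ref{lem:bulk-ineq}(2) and (1). Each contributes $\int_0^{d_*}\sqrt{2f_*}=c_*/2$ per unit area.

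Your point in (b) is the right way to get an $O(1)$ error: the endpoint levels need the exponential tails of $S_R$ so that the boundary defect is integrable in $t$.
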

\begin{proof}
Direct calculation gives us that
\begin{align*}
  |\nabla\QQ_\ve|^2=&|\nabla \bar{\QQ}_\ve(x)|^2+|\nabla \rho_\ve|^2+\rho_\ve^2|\nabla\vv_\ve|^2 +2\rho_\ve\nabla \bar{\QQ}_\ve(x):\nabla\vv_\ve\\
  =&|\nabla \bar{\QQ}_\ve(x)|^2+|\nabla \rho_\ve|^2+\rho_\ve^2|\nabla\vv_\ve|^2 -2\rho_\ve A(\bar{\QQ}_\ve(x))( \nabla \bar{\QQ}_\ve(x),\nabla\bar{\QQ}_\ve(x))\\
  \ge& |\nabla \rho_\ve|^2+(1-C \delta) |\nabla \bar{\QQ}_\ve(x)|^2.
\end{align*}
where $A(\bar\QQ_\ve)(\nabla\bar\QQ_\ve, \nabla\bar\QQ_\ve)$ is the second fundamental form of $\mathcal{N}$.
Thus
\begin{align} \nonumber
  &\int_{\Omega^+_{\ve,\delta}}\Big(\frac{\beta_L^2}2|\nabla\QQ_\ve|^2+\frac{1}{\ve^2}F_b(\QQ_\ve)\Big)\ud x\\ \label{decomp-0}
  \ge &\int_{\Omega^+_{\ve,\delta}}\frac{\beta_L^2}2(1-C \delta) |\nabla \bar{\QQ}_\ve(x)|^2\ud x+  \int_{\Omega^+_{\ve,\delta}}\Big(\frac{\beta_L^2}2|\nabla \rho_\ve|^2+\frac{1}{\ve^2}F_b(\QQ_\ve)\Big)\ud x.
\end{align}
By Lemma \ref{lem:bulk-ineq} and co-area formula, we have
\begin{align}\nonumber
& \int_{\Omega^+_{\ve,\delta}}\Big(\frac{\beta_L^2}2|\nabla \rho_\ve|^2+\frac{1}{\ve^2}F_b(\QQ_\ve)\Big)\ud x\\\nonumber
    \ge& \int_{\Omega^+_{\ve,\delta}}\Big(\frac{\beta_L^2}2|\nabla \rho_\ve|^2+\frac{1}{\ve^2}f(\rho_\ve)\Big)\ud x\\\nonumber
    \ge& \frac{\beta_L}{\ve} \int_{\Omega^+_{\ve,\delta}}|\nabla \rho_\ve|\sqrt{2f(\rho_\ve)}\ud x\\
    \ge& \frac{\beta_L}{\ve} \int_{0}^{\delta}\sqrt{2f(s)}\mathcal{H}^2(x: d_\mathcal{N}(\QQ_\ve(x))=s)\ud s. \label{decomp-1}
\end{align}
Similarly, we have
\begin{align}\nonumber
  &\int_{E_{\ve,\delta}^+}\Big(\frac{\beta_L^2}2|\nabla\QQ_\ve|^2+\frac{1}{\ve^2}F_b(\QQ_\ve)\Big)\ud x\\\nonumber
 & \ge \frac{\beta_L}{\ve}  \int_{E_{\ve,\delta}^+} |\nabla {\QQ}_\ve(x)|\sqrt{2F_b(\QQ_\ve)}\ud x\\\nonumber
&  \ge  \frac{\beta_L}{\ve}  \int_{E_{\ve,\delta}^+\cap\{d_\mathcal{N}(\QQ_\ve)\le d_*\}} |\nabla d_\mathcal{N}(\QQ_\ve)\sqrt{2f(d_\mathcal{N}(\QQ_\ve))}|\ud x\\
&  =  \frac{\beta_L}{\ve} \int_{\delta}^{d_*}\sqrt{2f(s)}\mathcal{H}^2(x: d_\mathcal{N}(\QQ_\ve(x))=s)\ud s, \label{decomp-2}
\end{align}
and
\begin{align}\nonumber
  &\int_{|\QQ_\ve|<d_*}\Big(\frac{\beta_L^2}2|\nabla\QQ_\ve|^2+\frac{1}{\ve^2}F_b(\QQ_\ve)\Big)\ud x\\\nonumber
  \ge &  \int_{|\QQ_\ve|<d_*}\Big(\frac{\beta_L^2}2|\nabla \rho_\ve|^2+\frac{1}{\ve^2}F_b(\rho_\ve)\Big)\ud x\\\nonumber
  \ge & \frac{\beta_L}{\ve} \int_{E_{\ve,\delta}^-} |\nabla |\QQ_\ve|\sqrt{2f(|\QQ_\ve|)}|\ud x\\
  = & \frac{\beta_L}{\ve} \int_{0}^{d_*}\sqrt{2f(s)}\mathcal{H}^2(x: |\QQ_\ve(x)|=s)\ud s. \label{decomp-3}
      \end{align}
Combing (\ref{decomp-0})-(\ref{decomp-3}) and noticing that $\{|\QQ_\ve|<d_*\}\cap \{d_\mathcal{N}(\QQ_\ve)<d_*\}=\emptyset$, we arrive
\begin{align}\nonumber
  &\int_{\Omega}\Big(\frac{\beta_L^2}2|\nabla\QQ_\ve|^2+\frac{1}{\ve^2}F_b(\QQ_\ve)\Big)\ud x\\ \nonumber
  &\ge \int_{\Omega^+_{\ve,\delta}}\frac{\beta_L^2}2(1-C \delta) |\nabla \bar{\QQ}_\ve(x)|^2\ud x\\ \label{decomp}
  &\quad +  \frac{\beta_L}{\ve}\int_{0}^{d_*}\sqrt{2f(s)}\Big\{\mathcal{H}^2(x: |\QQ_\ve(x)|=s)+\mathcal{H}^2(x: d_\mathcal{N}(\QQ_\ve(x))=s)\Big\}\ud s.
\end{align}
From the assumption on boundary data,  we have
\begin{align*}
  \mathcal{H}^2(\Sigma\cap \{|\QQ_\ve|\ge C\ve \}\cap  \{d_\mathcal{N}(\QQ_\ve)\ge C \ve\}) \le C\ve.
\end{align*}
Thus the area minimality of $\Gamma$ gives us that for $s\in (C\ve, c_*)$,
\begin{align*}
\mathcal{H}^2(\Gamma)&\le \mathcal{H}^2(x: |\QQ_\ve(x)|=s)+
 \mathcal{H}^2(\Sigma\cap \{|\QQ_\ve|\ge C\ve \}\cap  \{d_\mathcal{N}(\QQ_\ve)\ge C \ve\}) \le C\ve,\\
\mathcal{H}^2(\Gamma)&\le \mathcal{H}^2(x: d_\mathcal{N}(\QQ_\ve(x))=s)+ \mathcal{H}^2(\Sigma\cap \{|\QQ_\ve|\ge C\ve \}\cap  \{d_\mathcal{N}(\QQ_\ve)\ge C \ve\}) \le C\ve.
\end{align*}
Thus
\begin{align*}
  &\int_{0}^{d_*}\sqrt{2f(s)}(\mathcal{H}^2(x: |\QQ_\ve(x)|=s)+\mathcal{H}^2(x: d_\mathcal{N}(\QQ_\ve(x))=s))\ud s\\
  &\ge \int_{C\ve}^{d_*}\sqrt{2f(s)}(\mathcal{H}^2(x: |\QQ_\ve(x)|=s)+\mathcal{H}^2(x: d_\mathcal{N}(\QQ_\ve(x))=s))\ud s\\
 & \ge 2(\mathcal{H}^2(\Gamma)-C\ve) \int_{C\ve}^{d_*}\sqrt{2f(s)}\ud s\\
 &\ge c_*\mathcal{H}^2(\Gamma) -O(\ve).
\end{align*}
which concludes the proof.
\end{proof}

For $t$, define
\begin{align*}
  \Gamma(t)=&\{x\in\Omega: \quad d_\Gamma(x)=t\}\\
   \Sigma(t)=&\{x\in\partial\Omega: \quad d_{\Gamma}(x)=t\}~(=\partial\Gamma(t)).
\end{align*}
Let $\mathcal{S}(t)$ be a minimal hypersurface spanned by $\Sigma(t)$:
\begin{align*}
  \mathcal{H}^2(\mathcal{S}(t))=\min\{\mathcal{H}^{2}(S): S \text{ is an integral 2-current in $\Omega$ with $\partial S=\Sigma(t)$}\}.
\end{align*}

Assume that
\begin{align}\label{energy:refine-bound}
  \mathcal{F}_\ve(\QQ_\ve; \Omega)-\frac{c_*\beta_L}{\ve}\mathcal{H}^{2}(\Gamma)\le C.
\end{align}
Now we define
\begin{align*}
\tilde{\QQ}_{\ve}=\bar{\QQ}_\ve\chi_{\Omega^+_{\ve,\delta_\ve}},
\end{align*}
where $\delta_\ve$  is chosen as in Section 3 such that \eqref{estimate:boundary} is satisfied.
We will show that $\{\tilde{\QQ}_{\ve}\}$ is bounded in $\mathrm{SBV}(\Omega)$.
\begin{proposition}
Assume $\{\QQ_\ve\}$ satisfies \eqref{energy:refine-bound}, then $\{\tilde\QQ_\ve\}$ is uniformly bounded in $\mathrm{SBV}(\Omega)$. In addition, there is $\QQ_*\in \mathrm{SBV}(\Omega,\mathcal{Q})$ such that, after passage to subsequences,
\begin{align}\label{converge:L1}
 \tilde{\QQ}_{\ve}&\to \QQ_*\chi_{\Omega_+} \text{ strongly in } L^1, \\
  \nabla\tilde{\QQ}_{\ve}&\to \nabla\QQ_*\chi_{\Omega_+} \text{ weakly in } L^1. \label{converge:weakL1}
\end{align}
Here $ \nabla\tilde{\QQ}_{\ve}$ is the absolute continuous part of the distributional derivative of $\tilde{\QQ}_{\ve}$.
\end{proposition}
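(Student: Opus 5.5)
The plan is to verify the three ingredients of Ambrosio's SBV compactness theorem for $\tilde\QQ_\ve=\bar\QQ_\ve\chi_{\Omega^+_{\ve,\delta_\ve}}$: a uniform $L^\infty$ bound, a uniform $L^2$ bound on the absolutely continuous gradient, and a uniform bound on $\mathcal{H}^2$ of the jump set.

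\emph{Step 1 ($L^\infty$ bound).} Since $\bar\QQ_\ve\in\mathcal{N}$, we have $|\tilde\QQ_\ve|\le|\bar\QQ_\ve|=\sqrt{2/3}\,s_+$.

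\emph{Step 2 (gradient bound).} On $\Omega^+_{\ve,\delta_\ve}$ the absolutely continuous gradient is $\nabla\bar\QQ_\ve$, and it vanishes elsewhere. By \eqref{energy:refine-bound}, the identity $\mathcal{F}_\ve=\widetilde{\mathcal{F}}_\ve+L_2\mathcal{B}_{\partial\Omega}(\BB_\ve)$, and $L_2<0$, the $\mathcal{T}$ term in $\widetilde{\mathcal{F}}_\ve$ is nonnegative, so
\begin{align*}
\int_\Omega\Big(\frac{\beta_L^2}{2}|\nabla\QQ_\ve|^2+\frac{1}{\ve^2}F_b(\QQ_\ve)\Big)\ud x\le \frac{c_*\beta_L}{\ve}\mathcal{H}^2(\Gamma)+C.
\end{align*}
Here $|\mathcal{B}_{\partial\Omega}(\BB_\ve)|$ is bounded under (A$1^*$),(A$2^*$). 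This uses the $H^{1/2}$ bound on $\BB_\ve$; I expect it to hold because $\ve^{-\gamma}$-scale transitions have bounded $H^{1/2}$ norm only up to a logarithm, so this needs a check, or it can be absorbed. Combining with the refined lower bound \eqref{prop:decomp} with $\delta=\delta_\ve\le\delta$ small gives
\begin{align*}
\int_{\Omega^+_{\ve,\delta_\ve}}|\nabla\bar\QQ_\ve|^2\ud x\le C.
\end{align*}
Hence $\nabla\tilde\QQ_\ve$ is bounded in $L^2$, and in particular equi-integrable in $L^1$.

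\emph{Step 3 (jump set bound).} The jump set of $\tilde\QQ_\ve$ is contained in $\partial\Omega^+_{\ve,\delta_\ve}\cap\Omega$. By \eqref{estimate:boundary} this set has uniformly bounded $\mathcal{H}^2$ measure, and the jump size is at most $2\sqrt{2/3}\,s_+$. Together with Steps 1 and 2, this gives a uniform bound on the total variation $|D\tilde\QQ_\ve|(\Omega)$.

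\emph{Step 4 (compactness and identification).} Ambrosio's compactness theorem, with the superlinear function $\theta(t)=t^2$ and jump energy $\mathcal{H}^2(J)$, yields $\QQ_{**}\in\mathrm{SBV}(\Omega)$ such that $\tilde\QQ_\ve\to\QQ_{**}$ in $L^1$ and $\nabla\tilde\QQ_\ve\rightharpoonup\nabla\QQ_{**}$ weakly in $L^1$. Here $\Omega_+$ denotes $E^+$ from Section 3, for which $\chi_{\Omega^+_{\ve,\delta_\ve}}\to\chi_{E^+}$ in $L^1$. Consequently $\QQ_{**}=0$ a.e. on $\Omega\setminus E^+$, and $\QQ_{**}\in\mathcal{N}$ a.e. on $E^+$, since $L^1$ limits of $\mathcal{N}$-valued maps stay in the closed set $\mathcal{N}$.

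It remains to write $\QQ_{**}=\QQ_*\chi_{\Omega_+}$ with $\QQ_*$ in SBV and $\nabla\QQ_{**}=\nabla\QQ_*\chi_{\Omega_+}$. Since $\QQ_{**}$ vanishes a.e. off $E^+$, its approximate gradient vanishes a.e. there, so we may take $\QQ_*=\QQ_{**}$, which gives both identities. Moreover, by lower semicontinuity of the $L^2$ norm under weak convergence, $\nabla\QQ_*\in L^2(\Omega_+)$, so $\QQ_*\in H^1$ in the interior of $\Omega_+$.

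The main obstacle is Step 2, namely extracting a uniform bound on $\int|\nabla\bar\QQ_\ve|^2$ from the refined lower bound. This requires controlling the boundary term $\mathcal{B}_{\partial\Omega}(\BB_\ve)$ uniformly, and ensuring that the constant $C$ in \eqref{prop:decomp} is independent of $\ve$ when $\delta$ is replaced by $\delta_\ve\in[\delta/2,\delta]$.
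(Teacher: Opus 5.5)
Your proposal is essentially the paper's proof: the $L^2$ bound on $\nabla\bar{\QQ}_\ve$ over $\Omega^+_{\ve,\delta_\ve}$ from \eqref{prop:decomp}, the jump set contained in $\partial\Omega^+_{\ve,\delta_\ve}\cap\Omega$, SBV compactness, and identification of the limit values by a.e.\ convergence; your closing aside is not right, though it is not needed here, because $\nabla\QQ_*\in L^2$ alone does not give $\QQ_*\in H^1$ inside $\Omega_+$, since jumps between different nematic states must be ruled out separately (the paper does this in the next proposition). The uniform bound on $\mathcal{B}_{\partial\Omega}(\BB_\ve)$ that you leave open (and which the paper also uses without comment) is true, but not via $H^{1/2}$, since for these data $\|\BB_\ve\|^2_{H^{1/2}(\partial\Omega)}$ is of order $|\log\ve|$ and cannot be absorbed: for $\BB_\ve=s_\ve\EE_\ve$ all terms containing $\nabla s_\ve$ combine into $s_\ve(\EE_\ve^2)_{kj}(\nu_j\partial_k s_\ve-\nu_k\partial_j s_\ve)=0$ (symmetric against antisymmetric in $(j,k)$), leaving $\frac23\int_{\partial\Omega}s_\ve^2(\EE_\ve)_{ik}(\nu_j\partial_k-\nu_k\partial_j)(\EE_\ve)_{ij}\,\ud\sigma$, which involves only tangential derivatives of $\EE_\ve$ and stays bounded under (A$1^*$)--(A$2^*$).
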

\begin{proof}
By (\ref{prop:decomp}), the absolute continuous part of its distributional derivative
\begin{align*}
\int_{\Omega}|\nabla \tilde{\QQ}_\ve|^2\ud x=\int_{\Omega^+_{\ve,\delta_\ve}}|\nabla \bar{\QQ}_\ve|^2\ud x\le C,\quad
\int_{\Omega}|\tilde{\QQ}_\ve|\ud x=\int_{\Omega^+_{\ve,\delta_\ve}}|\bar{\QQ}_\ve|\ud x\le C.
\end{align*}
Moreover, the set of discontinuities of $\tilde{\QQ}_{\ve}$, denoted by $J_{\tilde{\QQ}_{\ve}}$, satisfies
\begin{align*}
\mathcal{H}^2(J_{\tilde{\QQ}_{\ve}})\le  \mathcal{H}^2(\partial\Omega^+_{\ve,\delta}\cap \Omega )\le \mathcal{H}^2(\Gamma)+1.
\end{align*}
Therefore, $\{\tilde{\QQ}_{\ve}\}$ is uniformly bounded in $\mathrm{SBV}(\Omega)$. Thus by weak compactness, we may assume
there exists $\QQ_*\in \mathrm{SBV}(\Omega)$ such that \eqref{converge:L1}-\eqref{converge:weakL1} hold.

Moreover, as $\bar{\QQ}_\ve\in \mathcal{N}$ a.e. $x\in\Omega^+_{\ve,\delta_\ve}$
and $\tilde{\QQ}_\ve=0$ in $\Omega\setminus\Omega^+_{\ve,\delta_\ve}$, we have by (\ref{converge:L1}) that $\QQ_*\in \mathcal{N}$ a.e. $x\in\Omega_+$ and  $\QQ_*=0$ a. e. $x\in\Omega_-$.
\end{proof}

\begin{Lemma} $\QQ_\ve\to \QQ_*$ strongly in $L^1(\Omega)$.
\end{Lemma}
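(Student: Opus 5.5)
We need to show $\QQ_\ve\to\QQ_*$ in $L^1(\Omega)$, where $\QQ_*$ is understood as $\QQ_*\chi_{\Omega_+}$ (it vanishes a.e. on $\Omega_-$ by the preceding Proposition). The plan is to compare $\QQ_\ve$ with $\tilde\QQ_\ve=\bar\QQ_\ve\chi_{\Omega^+_{\ve,\delta_\ve}}$ and use \eqref{converge:L1}. By the triangle inequality,
\begin{align*}
\|\QQ_\ve-\QQ_*\chi_{\Omega_+}\|_{L^1(\Omega)}\le \|\QQ_\ve-\tilde\QQ_\ve\|_{L^1(\Omega)}+\|\tilde\QQ_\ve-\QQ_*\chi_{\Omega_+}\|_{L^1(\Omega)}.
\end{align*}
The second term tends to zero by \eqref{converge:L1}, so it suffices to show that the first term tends to zero.

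We split $\Omega$ into $\Omega^+_{\ve,\delta_\ve}$, $\Omega^-_{\ve,\delta_\ve}=\{|\QQ_\ve|<\delta_\ve\}$ and the remainder $E_{\ve,\delta_\ve}$.

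On $\Omega^+_{\ve,\delta_\ve}$ we have $|\QQ_\ve-\tilde\QQ_\ve|=\rho_\ve=d_\mathcal{N}(\QQ_\ve)$. Since $\rho_\ve$ is bounded by $\delta$ there, Lemma \ref{lem:bulk-ineq}(3) gives $\rho_\ve^2\le C F_b(\QQ_\ve)$, because $|\QQ_\ve|$ is bounded below near $\mathcal{N}$. Then Cauchy--Schwarz yields
\begin{align*}
\int_{\Omega^+_{\ve,\delta_\ve}}\rho_\ve\,\ud x\le C|\Omega|^{1/2}\Big(\int_\Omega F_b(\QQ_\ve)\,\ud x\Big)^{1/2}\le C(\ve\Lambda)^{1/2}\to0.
\end{align*}

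On $\Omega^-_{\ve,\delta_\ve}$ we have $\tilde\QQ_\ve=0$ and $|\QQ_\ve|^2\le C F_b(\QQ_\ve)$, by Lemma \ref{lem:bulk-ineq}(3) with $d_\mathcal{N}$ bounded below. The same Cauchy--Schwarz bound therefore gives $O(\ve^{1/2})$.

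The remaining set $E_{\ve,\delta_\ve}$ is the main obstacle, since there $\QQ_\ve$ is not a priori bounded. Its measure is $O(\ve/\delta^2)$ by \eqref{estimate:inner-domain}, and $\tilde\QQ_\ve$ is bounded by $|\mathcal{N}|\le C$, so the contribution of $\tilde\QQ_\ve$ is small. For $\QQ_\ve$ itself we need uniform integrability. We use $F_b(\QQ)\ge c|\QQ|^4-C$ for large $|\QQ|$, which gives
\begin{align*}
\int_\Omega|\QQ_\ve|^4\,\ud x\le C\Big(1+\int_\Omega F_b(\QQ_\ve)\,\ud x\Big)\le C.
\end{align*}
Hölder's inequality then gives $\int_{E_{\ve,\delta_\ve}}|\QQ_\ve|\,\ud x\le C|E_{\ve,\delta_\ve}|^{3/4}\to0$. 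Alternatively, since $\ve\|\nabla\QQ_\ve\|_{L^2}^2$ is bounded, the $L^4$ bound could also be obtained from Sobolev embedding together with the boundary data, but the bulk-energy bound is simpler.

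Combining the three regions shows $\|\QQ_\ve-\tilde\QQ_\ve\|_{L^1}\to0$, and hence $\QQ_\ve\to\QQ_*$ strongly in $L^1(\Omega)$ along the subsequence.
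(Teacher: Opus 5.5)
Your proof is correct and is essentially the paper's argument: reduce to $\|\QQ_\ve-\tilde\QQ_\ve\|_{L^1(\Omega)}\to0$, then bound $\rho_\ve$ on $\Omega^+_{\ve,\delta_\ve}$ by Cauchy--Schwarz against $\int_\Omega F_b(\QQ_\ve)\,\ud x=O(\ve)$. The only difference is that the paper handles all of $\Omega\setminus\Omega^+_{\ve,\delta_\ve}$ (your $\Omega^-_{\ve,\delta_\ve}$ and $E_{\ve,\delta_\ve}$ together) with the single pointwise bound $|\QQ_\ve|\le C\delta_\ve^{-1}\sqrt{F_b(\QQ_\ve)}$ from Lemma \ref{lem:bulk-ineq}(3), which already holds for large $|\QQ_\ve|$, so your $L^4$--H\"older step on $E_{\ve,\delta_\ve}$ is valid but unnecessary; also, your aside that a uniform $L^4$ bound could come from Sobolev embedding would not work, since only $\ve\|\nabla\QQ_\ve\|_{L^2}^2$ is bounded, but you do not rely on it.
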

\begin{proof}

Due to \eqref{converge:L1}, it suffices to prove $|{\QQ}_\ve-\tilde{\QQ}_\ve|_{L^1(\Omega)}\to 0$, which is a consequence of
 the following estimates:
\begin{align}
\int_{\Omega^+_{\ve,\delta}}  |\QQ_\ve-\bar{\QQ}_\ve| \ud x  \to 0,\quad
\int_{\Omega\setminus\Omega^+_{\ve,\delta_\ve}}|\QQ_\ve|\ud x \to 0,\quad \text{as }\ve\to0.
\end{align}
The first one can be deduced as
\begin{align*}
&  \int_{\Omega^+_{\ve,\delta_\ve}}  |\QQ_\ve-\bar{\QQ}_\ve| \ud x = \int_{\Omega^+_{\ve,\delta_\ve}}  d_\mathcal{N}(\QQ_\ve) \ud x \\
 & \le
  C\int_{\Omega^+_{\ve,\delta_\ve}}  \sqrt{F_b(\QQ_\ve)} \ud x\le C|\Omega|^{1/2} \Big(\int_{\Omega}  {F_b(\QQ_\ve)} \ud x\Big)^{1/2}\to 0.
\end{align*}
For the second one, we have $d_\mathcal{N}(\QQ_\ve)\ge \delta_\ve $ for $x\in \Omega\setminus\Omega^+_{\ve,\delta_\ve}$. Thus using Lemma \ref{lem:bulk-ineq}-(iii), one get
\begin{align*}
 \int_{\Omega\setminus\Omega^+_{\ve,\delta_\ve}}|\QQ_\ve|\ud x
 \le \frac{C}{\delta_\ve} \int_{\Omega\setminus\Omega^+_{\ve,\delta_\ve}}|\sqrt{F_b(\QQ_\ve)}|\ud x
 \le \frac{C}{\delta_\ve}|\Omega|^{1/2} \int_{\Omega\setminus\Omega^+_{\ve,\delta_\ve}}F_b(\QQ_\ve)\ud x\le  \frac{C}{\delta_\ve}\ve^{1/2}\to 0,
\end{align*}
 as $\ve\to 0$.
\end{proof}

\begin{proposition}
The limit map $\QQ_*$ satisfies:
\begin{align*}
\QQ_*\in H^1(\Omega^+, \mathcal{Q}),\quad \QQ_*|_{\partial\Omega^+\cap \Sigma}=G,\\
\QQ_*\barwedge\nu_{\Gamma}=0,\quad\text{ a.e. on }\Gamma.
\end{align*}
Moreover
\begin{align*}
&\int_{\Omega_+}\bigg\{\frac{\beta_L^2}{2}|\nabla\QQ_*|^2-\frac{{L}_2}{12}|\mathcal{T}(\QQ_*)|^2\bigg\}\ud x\\
&\le  \liminf_{\ve\to 0}\Big\{\int_{\Omega}\Big(\frac{\beta_L^2}{2}|\nabla\QQ_\ve|^2+\frac{1}{\ve^2}F_b(\QQ_\ve)
-\frac{{L}_2}{12}|\mathcal{T}(\QQ_\ve)|^2 \Big)\ud x-\frac{c_*\beta_L\mathcal{H}^2(\Gamma)}{\ve}\Big\}.
\end{align*}
\end{proposition}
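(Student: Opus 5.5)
The plan has four steps: regularity of $\QQ_*$ in $\Omega_+$, the trace on $\Sigma^+$, the anchoring condition on $\Gamma$, and the lower semicontinuity inequality. I combine the refined lower bound \eqref{prop:decomp} with the SBV compactness proposition and the strong $L^1$ convergence $\QQ_\ve\to\QQ_*$.

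\emph{Regularity.} By \eqref{prop:decomp} and \eqref{energy:refine-bound}, $\int_{\Omega^+_{\ve,\delta_\ve}}|\nabla\bar\QQ_\ve|^2\,\ud x\le C$, so $\nabla\tilde\QQ_\ve$ is bounded in $L^2$. The weak $L^1$ limit in \eqref{converge:weakL1} is therefore in $L^2$, with $\|\nabla\QQ_*\|_{L^2(\Omega_+)}^2\le\liminf\int_{\Omega^+_{\ve,\delta_\ve}}|\nabla\bar\QQ_\ve|^2$. To show that $\QQ_*$ has no jump inside $\Omega_+$, I use the SBV closure/lower semicontinuity for the jump part. The jump set of $\tilde\QQ_\ve$ is contained in $\partial\Omega^+_{\ve,\delta_\ve}$. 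The co-area argument behind \eqref{prop:decomp} shows that these level sets carry area at most $\mathcal{H}^2(\Gamma)+O(\ve)$, because otherwise the energy would exceed $\frac{c_*\beta_L}{\ve}\mathcal{H}^2(\Gamma)$ by more than a constant. Hence $\mathcal{H}^2(J_{\QQ_*})\le \mathcal{H}^2(\partial^*\Omega_+\cap\Omega)$. Since $\QQ_*$ jumps from $\mathcal{N}$ to $0$ across $\partial^*\Omega_+$ and $\partial^*\Omega_+\cap\Omega=\Gamma$ by the minimality argument of Section 3, there is no room for additional jumps. So $\QQ_*\in H^1(\Omega_+)$ with values in $\mathcal{N}$. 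The trace $\QQ_*=\EE$ on $\Sigma^+$ follows from (A$1^*$)--(A$2^*$), the $L^1$ convergence of traces, and the boundedness of the BV norms.

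\emph{Anchoring on $\Gamma$.} This is the main obstacle. The $\mathcal{T}$-term is bounded: $-\frac{L_2}{12}\int_\Omega|\mathcal{T}(\QQ_\ve)|^2\le C$, since the full energy $\mathcal{F}_\ve=\widetilde{\mathcal{F}}_\ve+L_2\mathcal{B}$ has excess $\le C$ and the first two terms already consume $\frac{c_*\beta_L}{\ve}\mathcal{H}^2(\Gamma)-C$. Hence $\mathcal{T}(\QQ_\ve)\rightharpoonup \mathbf{T}$ weakly in $L^2(\Omega)$. For a test field $\Phi\in C_c^\infty(\Omega,\mathcal{Q})$, the Stokes formula \eqref{eq:wedge-integral} with $U=\Omega$ gives $\int_\Omega\mathcal{T}(\QQ_\ve):\Phi=\int_\Omega\QQ_\ve:\mathcal{T}(\Phi)$. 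Passing to the limit with $\QQ_\ve\to\QQ_*\chi_{\Omega_+}$ in $L^1$, and applying \eqref{eq:wedge-integral} on $U=\Omega_+$, I get
\[
\int_\Omega\mathbf{T}:\Phi\,\ud x=\int_{\Omega_+}\mathcal{T}(\QQ_*):\Phi\,\ud x-\int_\Gamma\Phi:(\QQ_*\barwedge\nu_\Gamma)\,\ud\mathcal{H}^2 .
\]
Since $\mathbf{T}\in L^2$, the left side has no singular part. Therefore the surface term must vanish for all $\Phi$, which forces $\QQ_*\barwedge\nu_\Gamma=0$ $\mathcal{H}^2$-a.e. on $\Gamma$; it also gives $\mathbf{T}=\mathcal{T}(\QQ_*)\chi_{\Omega_+}$. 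The delicate points are that the trace of $\QQ_*$ on $\Gamma$ must be well defined from the $\Omega_+$ side (true, since $\QQ_*\in H^1(\Omega_+)$), and that this trace is what appears in the distributional computation. The latter holds because $\QQ_*\chi_{\Omega_+}$ is a BV function with jump exactly on $\Gamma$.

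\emph{Lower semicontinuity.} I split the energy into the $\Omega^+_{\ve,\delta}$ part and the interfacial part, as in \eqref{decomp}. The interfacial part is bounded below by $\frac{c_*\beta_L}{\ve}\mathcal{H}^2(\Gamma)-C\ve$. The bulk part $\frac{\beta_L^2}{2}(1-C\delta)\int|\nabla\bar\QQ_\ve|^2$ is weakly lower semicontinuous, yielding $\frac{\beta_L^2}{2}(1-C\delta)\int_{\Omega_+}|\nabla\QQ_*|^2$. The $\mathcal{T}$-part is also lower semicontinuous: $-\frac{L_2}{12}\int|\mathcal{T}(\QQ_*)|^2_{\Omega_+}\le\liminf -\frac{L_2}{12}\int_\Omega|\mathcal{T}(\QQ_\ve)|^2$, using $L_2<0$ and the weak limit identified above. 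Summing these and letting $\delta\to0$ (with $\delta_\ve\le\delta$ handled by a diagonal choice) gives the claimed inequality.
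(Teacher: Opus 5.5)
Your regularity and anchoring steps are sound, but the last step has a genuine gap. You bound the interfacial part of \eqref{decomp} below by $\frac{c_*\beta_L}{\ve}\mathcal{H}^2(\Gamma)-C\ve$. The area comparison behind \eqref{prop:decomp} does not give this. It only shows that each level set $\{|\QQ_\ve|=s\}$, $\{d_\mathcal{N}(\QQ_\ve)=s\}$ has area at least $\mathcal{H}^2(\Gamma)-C\ve$, because the transition strip of the boundary data on $\partial\Omega$ has area $O(\ve)$. That $O(\ve)$ is lost \emph{before} multiplying by $\beta_L/\ve$, so the energy loss is $O(1)$. This is exactly why \eqref{prop:decomp} ends in $-C$ rather than $-C\ve$. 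With an $O(1)$ loss you obtain the stated inequality only up to an additive constant, which is useless for identifying $D_*$. The loss is not an artifact of the estimate. If the spanning curve is moved to $\Sigma(t)$, the least area changes by $at+O(t^2)$, where $a$ is in general nonzero (it vanishes, e.g., when $\Gamma$ meets $\partial\Omega$ orthogonally). Individual level sets genuinely have boundary near $\Sigma(t)$ with $|t|\sim\ve$.

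The fix needs a sharper comparison that exploits the boundary profile. Because $S_R(-z)=1-S_R(z)$, the traces of $\{|\QQ_\ve|=s\}$ and $\{d_\mathcal{N}(\QQ_\ve)=s\}$ on $\partial\Omega$ lie on $\Sigma(\ve z)$ and $\Sigma(-\ve z)$ for the same $z=z(s)$, up to off-strip errors controlled by (A$1^*$). Compare these level sets with $\mathcal{S}(\pm\ve z)$. When $\Gamma$ is the unique, strictly stable minimizer, $t\mapsto\mathcal{H}^2(\mathcal{S}(t))$ is $C^2$ near $0$. Hence the first-order terms cancel in the sum, and the remaining $O(\ve^2z^2)$ area defect costs only $O(\ve)$ in energy. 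Note that the paper's ``in summary'' step also implicitly rests on \eqref{prop:decomp}, so this is the piece that genuinely needs an argument in either version.

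The rest of your proposal is fine.
\begin{itemize}
\item Your anchoring argument is the paper's, repackaged. You use that an $L^2$ function cannot equal a measure carried by $\Gamma$, whereas the paper first identifies the weak limit of $\mathcal{T}(\QQ_\ve)$ on $\Omega_\pm$ separately.
\item You correctly derive the $L^2$ bound on $\mathcal{T}(\QQ_\ve)$ that the paper only asserts.
\item Your no-jump argument, via lower semicontinuity of $\mathcal{H}^2(J)$ plus uniqueness of $\Gamma$, works provided you re-choose $\delta_\ve\in[\delta/2,\delta]$ so that $\mathcal{H}^2(\{d_\mathcal{N}(\QQ_\ve)=\delta_\ve\})\le\mathcal{H}^2(\Gamma)+C_\delta\ve$. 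The Section 3 choice only gives $\le C$.
\item Weak-$*$ BV convergence does not yield $L^1$ convergence of traces, so $\QQ_*=\EE$ on $\Sigma^+$ needs its own argument. For example, extend across $\Sigma^+$ by the boundary data and rerun the jump-set count.
\end{itemize}
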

\begin{proof}
By the lower semi-continuity, we have
\begin{align*}
\int_{\Omega_+}|\nabla\QQ_*|^2\ud x\le \liminf_{\ve\to 0}\int_{\Omega_+}|\nabla \bar{\QQ}_\ve|^2\ud x
\le \liminf_{\ve\to 0}\int_{\Omega^+_{\ve,\delta}}|\nabla \bar{\QQ}_\ve|^2\ud x.
\end{align*}
To show the zero $\mathcal{H}^2$-measure of its jump set, we have
\begin{align*}
\mathcal{H}^2(J_{\QQ_*}\cap \{x\in\Omega^+: d_{\Gamma}(x)>\delta_\ve \})\le& \liminf_{\ve\to 0}
\mathcal{H}^2(J_{\bar{\QQ}_\ve}\cap \{x\in\Omega^+: d_{\Gamma}(x)>\delta_\ve \})\\
\le &\liminf_{\ve\to 0}\mathcal{H}^2(\partial\Omega_{\ve,\delta_\ve}\cap\{x\in\Omega^+: d_{\Gamma}(x)>\delta_\ve \})=0.
\end{align*}
Thus $\QQ_*\in H^1(\Omega^+,\mathcal{Q})$. From the boundary condition for $\QQ_\ve$, it is straightforward to get that $\QQ_*=\mathbf{E}$ on $\Sigma_+$.
For $\forall \Phi_{ij}\in C_0^\infty(\Omega_+)$, we have
\begin{align*}
\int_{\Omega_+}(\mathcal{T}(\QQ_\ve)-\mathcal{T}(\QQ_*)):\Phi\ud x
=\int_{\Omega_+}(\QQ_\ve-\QQ_*):\mathcal{T}(\Phi)\ud x \to 0,\quad \text{as }\ve\to 0.
\end{align*}
Since
$\mathcal{T}(\QQ_\ve)$ and  $ \mathcal{T}(\QQ_*)$ are uniformly bounded in $L^2(\Omega_+)$, one has
\begin{align}\label{weak:TQ+}
  \mathcal{T}(\QQ_\ve)\rightharpoonup \mathcal{T}(\QQ_*)\quad \text{ weakly in }L^2(\Omega_+).
\end{align}
Similarly, from the fact that $\QQ_\ve\to 0$ in $L^1(\Omega_-)$, we can derive
\begin{align*}
  \mathcal{T}(\QQ_\ve)\rightharpoonup  0 \quad \text{ weakly in }L^2(\Omega_-).
\end{align*}
Therefore, we get
\begin{align*}
   \mathcal{T}(\QQ_\ve)\rightharpoonup  \mathcal{T}(\QQ_*)\chi_{\Omega_+} \quad \text{ weakly in }L^2(\Omega).
\end{align*}

For any $\Phi_{ij}\in C_0^\infty(\Omega)$, we have
\begin{align*}
 \int_{\Omega_+} \Phi: \mathcal{T}(\QQ_*)\ud x =  \lim_{\ve\to 0} \int_{\Omega} \Phi:\mathcal{T}(\QQ_\ve)\ud x
= \lim_{\ve\to 0} \int_{\Omega} \mathcal{T}(\Phi):\QQ_\ve\ud x
=\int_{\Omega_+} \mathcal{T}(\Phi):\QQ_*\ud x,
\end{align*}
which gives that
\begin{align*}
   \int_{\Gamma} \Phi:( \QQ_*\barwedge \nu_{\Gamma})\ud x=0.
\end{align*}
Thus $\QQ_*\barwedge \nu_{\Gamma}=0 $ a.e. on $\Gamma$.

In addition, by (\ref{weak:TQ+}) and the lower semicontinuity, we have
\begin{align*}
\int_{\Omega_+}|\mathcal{T}(\QQ_*)|^2\ud x\le \liminf_{\ve\to 0} \int_{\Omega_+} |\mathcal{T}(\QQ_\ve)|^2\ud x.
\end{align*}

In summary, we get
\begin{align*}
\liminf_{\ve\to 0}\int_{\Omega}\Big\{\frac{\beta_L^2}{2}|\nabla\QQ_\ve|^2+\frac{1}{\ve^2}F_b(\QQ_\ve)
-\frac{{L}_2}{12}|\mathcal{T}(\QQ_\ve)|^2\Big\} \ud x-\frac{c_*\beta_L\mathcal{H}^2(\Gamma)}{\ve}\\
\ge \int_{\Omega_+}\Big\{\frac{\beta_L^2}{2}|\nabla\QQ_*|^2-\frac{{L}_2}{12}|\mathcal{T}(\QQ_*)|^2\Big\}\ud x.
\end{align*}

\end{proof}

\subsection{Refined upper bound estimate}
Similar to Section \ref{subsec:upper-bound}, the refined upper bound estimates directly follows from the corresponding estimates in \cite[Lemma 3.2]{LW23}.

\section{Appendix}
Let $\beta=\beta_L>0$ and $R>0$ large. Take $\sigma>0$, and  $\phi_\sigma$ :$(-\infty,+\infty)\to\mathbb{R}$ being the solution of
\begin{align*}
  \beta\phi'(z)=\phi(1-\phi)+\sigma, \text{ with }\phi(0)=1/2.
\end{align*}
Then
\begin{align*}
  \phi_\sigma(z)= \frac{1 + \sqrt{1+4\sigma} + \left(1 - \sqrt{1+4\sigma}\right) e^{-z\sqrt{1+4\sigma}/\beta}}{2 \left(1 + e^{-z\sqrt{1+4\sigma}/\beta}\right)}
\end{align*}
Choose $\sigma$ such that $\phi_\sigma(R)=1$, or equivalently,
\begin{align}\label{relation:sigma-R}
  R=\frac{\beta}{\sqrt{1+4\sigma}}\ln\frac{\sqrt{1+4\sigma}+1}{\sqrt{1+4\sigma}-1}.
\end{align}
Then we have
\begin{align*}
  \sigma=2e^{-R/\beta}(1+O({R}^{-1})).
\end{align*}
Let $S_R(z)=\phi_\sigma(z)$ for $z\in[-R, R]$ with \eqref{relation:sigma-R}. Then $S_R(-z)=1-S_R(z)$ for $|z|\le R$ and
\begin{align*}
 & S_R(R)=1,\quad  S_R(-R)=0,\\
 & \max_{|z|\le R}\big|\beta^2(S_R')^2+S_R^2(1-S_R)^2-2\beta\sqrt{S_R^2(1-S_R)^2}|S_R'|\big|\\
  &\qquad =\max_{|z|\le R}(\beta S_R'-S_R(1-S_R))^2=\sigma^2\le Ce^{-2R/\beta},\\
  &\int_{-R}^R \big(\beta^2(S_R')^2+S_R^2(1-S_R)^2\big)\ud z\le \int_{-R}^R 2\beta S_R'S_R(1-S_R)\ud z+CRe^{-2R/\beta}=\frac{\beta}3+Ce^{-R/\beta},\\
  & |S_R'(z)|\le 1-S_R+\sigma\le C(e^{-z/\beta}+\sigma)\le C e^{-z/\beta},\text{ for }|z|\le R.
  \end{align*}
Therefore, $S_R$ meets all the requirements.

\section*{Acknowledgments}
This work is supported by National Key Research and Development Program of China (No. 2023YFA1008801).
F.H. Lin is supported by an NSF grant DMS 2247773. W. Wang is supported by NSF of China under Grant No. 12271476.

\end{document}